\theoremstyle{plain}
\newtheorem{thm}{Theorem}[section]
\newtheorem{theorem}[thm]{Theorem}
\newtheorem{lemma}[thm]{Lemma}
\newtheorem{proposition}[thm]{Proposition}
\theoremstyle{definition}
\newtheorem{remark}[thm]{Remark}
\newtheorem{question}[thm]{Question}
\numberwithin{equation}{section}
\newcommand{\sI}{{\mathcal I}}
\newcommand{\sL}{{\mathcal L}}
\newcommand{\sO}{{\mathcal O}}
\newcommand{\C}{{\mathbb C}}
\renewcommand{\P}{{\mathbb P}}
\newcommand{\Q}{{\mathbb Q}}
\newcommand{\R}{{\mathbb R}}
\newcommand{\Z}{{\mathbb Z}}
\title [Isomorphic quartic K3 surfaces]{Isomorphic quartic K3 surfaces in the view of Cremona and projective transformations}
\author{Keiji Oguiso}
\address{Mathematical Sciences, the University of Tokyo, Meguro Komaba 3-8-1, Tokyo, Japan and Korea Institute for Advanced Study, Hoegiro 87, Seoul, 
133-722, Korea}
\email{oguiso@ms.u-tokyo.ac.jp}
\thanks{The author is supported by JSPS Grant-in-Aid (S) No 25220701, JSPS Grant-in-Aid (S) 15H05738, JSPS Grant-in-Aid (A) 16H02141, JSPS Grant-in-Aid (B) 15H03611, and by KIAS Scholar Program.}
\begin{document}

\maketitle

\begin{abstract}
We show that there is a pair of smooth complex quartic K3 surfaces $S_1$ and $S_2$ in $\P^3$ such that $S_1$ and $S_2$ are isomorphic as abstract varieties 
but not Cremona isomorphic. We also show, in a geometrically explicit way, that there is a pair of smooth complex quartic K3 surfaces $S_1$ and $S_2$ in $\P^3$ such that $S_1$ and $S_2$ are Cremona isomorphic, but not projectively isomorphic. This work is much motivated by several e-mails from Professors Tuyen Truong and J\'anos Koll\'ar. 
\end{abstract}

\section{Introduction}

Throughout this note we work over $\C$. 

Let $X$ and $Y$ be closed subvarieties of $\P^n$, i.e., irreducible reduced closed subschemes of $\P^n$. We say that $X$ and $Y$ are {\it Cremona equivalent} (resp. {\it Cremona isomorphic}) if there is $f \in {\rm Bir}\,(\P^n)$ such that $f$ is defined at the generic point of $X$ and $f|_X : X \dasharrow Y$ is a birational map (resp. an isomorphism). We say that $X$ and $Y$ are {\it projectively equivalent} if there is $f \in {\rm Aut}\, (\P^n) = {\rm PGL}\, (n+1, \C)$ such that $f|_X : X \to Y$ is an isomorphism. 

This note is, in some sense, a continuation of our previous paper \cite{Og13}, has some overlap with an unpublished note \cite{Og12} and is much inspired by the following question asked by Tuyen Truong to me \cite{Tr16}:
\begin{question}\label{truong}
Assume that $X$ and $Y$, subvarieties of $\P^n$, are birational as abstract varieties. Are then $X$ and $Y$ Cremona equivalent in $\P^n$?
\end{question}
Answers are known in both affirmative and negative directions. This was pointed out by Massimiliano Mella to me after the first version of this note. In fact, in an affirmative direction, Mella and Polastri (\cite{MP09}) proved the following satisfactory:
\begin{theorem}\label{mella1}
Question (\ref{truong}) is affirmative if $n - \dim\, X \ge 2$.  
\end{theorem}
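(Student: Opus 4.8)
The statement to be proved is: given birational subvarieties $X,Y\subset\P^n$ with $\dim X=\dim Y=d\le n-2$, there is $f\in{\rm Bir}\,(\P^n)$, defined at the generic point of $X$, whose restriction $f|_X:X\dasharrow Y$ is birational. Cremona modifications compose --- if $f$ is defined at the generic point of $X$ and $g$ at the generic point of $\overline{f(X)}$, then $g\circ f$ is defined at the generic point of $X$, since indeterminacy loci are closed of codimension $\ge 2$ --- so it suffices to build $f$ out of simpler pieces. The plan is to do this in two stages: first prove Cremona equivalence after a harmless enlargement of the ambient space, then descend back to $\P^n$ using the codimension hypothesis. Fix throughout a birational map $\phi:X\dasharrow Y$ and a resolution of it: a smooth projective variety $W$ with birational morphisms $p:W\to X$ and $q:W\to Y$ satisfying $\phi\circ p=q$.

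\emph{Step 1 (stabilization).} I would first show that the images of $X$ and $Y$ under a fixed linear embedding $\P^n\hookrightarrow\P^N$, for $N$ sufficiently large (e.g.\ $N=2n+1$), are Cremona equivalent in $\P^N$. This is the ``easy'' half and needs no codimension hypothesis: a projective space of large dimension leaves ample room to maneuver. Concretely, one can place $X$ and $Y$ in two complementary linear subspaces $\P^n$ of $\P^{2n+1}$ and use the join of the graph $\Gamma_\phi\subset X\times Y$, a subvariety of $\P^{2n+1}$ that contains both $X$ and $Y$, as a bridge along which a suitable birational self-map of $\P^{2n+1}$ carries one onto the other.

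\emph{Step 2 (descent).} The heart of the argument is the reverse move: if $X',Y'$ lie in a hyperplane $H\cong\P^m$ of $\P^{m+1}$, with $\codim_H X'=\codim_H Y'\ge 2$ (equivalently $\codim_{\P^{m+1}} X'\ge 3$), and $X'$ and $Y'$ are Cremona equivalent in $\P^{m+1}$, then they are already Cremona equivalent inside $H$. Starting from a Cremona transformation $F$ of $\P^{m+1}$ that realizes the equivalence, one modifies $F$ by pre- and post-composing with Cremona transformations of $\P^{m+1}$ that preserve $H$ and are in sufficiently general position relative to $X'$, $Y'$ and the indeterminacy and exceptional loci involved, so that the modified map preserves $H$ and restricts on $H$ to a birational self-map of $\P^m$ carrying $X'$ birationally onto $Y'$. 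The one extra unit of codimension is exactly what creates room to push the bad behaviour of $F$ along $H$ off $X'$ and absorb it; this is where the hypothesis $d\le n-2$ is indispensable, and indeed the conclusion fails for hypersurfaces: for instance, the isomorphic quartic K3 surfaces in $\P^3$ exhibited in the present note are not Cremona equivalent there, since every birational map between smooth K3 surfaces is an isomorphism. Iterating the descent to bring the ambient dimension down from $N$ to $n$ and combining with Step 1 completes the proof. I expect Step 2 --- in particular, making rigorous the normalization of $F$ so that it genuinely preserves $H$ while inducing the prescribed birational map on $X'$ --- to be the main obstacle.
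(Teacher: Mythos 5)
Theorem (\ref{mella1}) is not proved in this note at all: it is quoted from Mella--Polastri \cite{MP09}, so there is no in-paper argument to match. Judged on its own terms, your proposal is a program rather than a proof, and its crucial step is broken as stated. In Step 2 you propose to arrange that the modified map preserve $H$ by pre- and post-composing $F$ with Cremona transformations $G_1,G_2$ of $\P^{m+1}$ that preserve $H$. This cannot work: a birational self-map of $\P^{m+1}$ sends exactly one prime divisor dominantly onto $H$, and for an $H$-preserving $G_2$ that divisor is $H$ itself; hence $G_2\circ F\circ G_1$ maps the proper transform of $H$ onto $\overline{G_2(\overline{F(H)})}$, which equals $H$ only if $F$ already mapped $H$ onto $H$. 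So no amount of ``general position'' composition with $H$-preserving maps changes whether $H$ is preserved. To descend you would instead have to confront $\overline{F(H)}$, a rational hypersurface of $\P^{m+1}$ (or a contracted image), and move it back onto a hyperplane by a Cremona transformation compatible with $X'$ and $Y'$ --- but Cremona equivalence of divisors is precisely the delicate case (cf.\ Theorem (\ref{mella2}) $=$ \cite{MP12}), and even the statement that a rational hypersurface is Cremona equivalent to a hyperplane is a hard theorem in low dimension, not a general-position fact. Thus the key descent lemma is unsupported, and the mechanism offered for it fails. Step 1 is also only asserted: ``a suitable birational self-map of $\P^{2n+1}$ carries one onto the other'' along the join of the graph is exactly the kind of claim that needs a construction, and none is given. (As an aside, your justification that the quartic K3 pairs of this note are not Cremona equivalent ``since every birational map between smooth K3 surfaces is an isomorphism'' is not a proof of that fact; the note needs Proposition (\ref{takahashi}) plus the lattice analysis of Section \ref{sect3}.)

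For comparison, the actual argument of \cite{MP09} does not use a stabilization--descent through larger projective spaces. Working in the given $\P^n$, they factor the birational map $X\dasharrow Y$ (after re-embedding a common resolution by a suitable very ample system and taking general projections) into elementary steps, each of which is realized by an explicit Cremona transformation of $\P^n$; the hypothesis $\codim X\ge 2$ is what allows the fundamental loci of these transformations to be chosen so that they meet $X$ harmlessly, and it is exactly the room that disappears in the divisorial case. If you want to salvage your outline, the part requiring a genuinely new idea is the descent lemma; as written it would prove too much, since applied with $X'$, $Y'$ of codimension $1$ in $H$ it would contradict Theorem (\ref{main}).
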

In a negative direction, they also proved the following (\cite{MP12}):
\begin{theorem}\label{mella2}
Let $Z$ be a smooth projective variety of dimension $n-1$. Assume that 
$2 \le n \le 15$. Then there are birational morphisms onto the images
$\varphi_i : Z \to \P^n$ ($i=1, 2$)
such that $X := \varphi_1(Z)$ and $Y := \varphi_2(Z)$ are (necessarily birational, but) not Cremona equivalent in $\P^n$.
\end{theorem}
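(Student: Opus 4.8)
The plan is to convert the statement into a question about linear systems on $Z$, isolate a quantity that is invariant under Cremona equivalence, and then build two hypersurface models of $Z$ on which that quantity differs. The guiding principle is the contrast with Theorem~\ref{mella1}: an obstruction can exist only in codimension one, where the embedding of $Z$ is rigidified by adjunction (the dualizing sheaf of a hypersurface $X\subset\P^n$ of degree $d$ being $\sO_X(d-n-1)$), whereas in codimension $\ge 2$ there is enough room to construct the equivalence. Concretely, fix a smooth projective $Z$ of dimension $n-1$; a birational morphism $\varphi\colon Z\to\P^n$ onto a hypersurface is the same datum as a line bundle $L\in\Pic(Z)$ together with an $(n+1)$-dimensional base-point-free, generically injective subsystem $V\subseteq H^0(Z,L)$, and then the image hypersurface $X=\varphi(Z)$ has degree $L^{\,n-1}$ and normalization $\varphi\colon Z\to X$. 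I would phrase everything in terms of the pairs $(L_i,V_i)$.

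Next I would record the constraint imposed by a Cremona equivalence. If $X=\varphi_1(Z)$ and $Y=\varphi_2(Z)$ are Cremona equivalent by some $f\in{\rm Bir}(\P^n)$ of degree $\delta$, resolve $f$ by a smooth model $\mu\colon W\to\P^n$ with second projection $\nu=f\circ\mu\colon W\to\P^n$, so that $\nu^*H=\delta\,\mu^*H-\sum_i m_iE_i$ with all $m_i\ge 0$ and $H$ the hyperplane class. Restricting to the strict transform $\widetilde X\subset W$ of $X$, which maps birationally to $X$ via $\mu$ and to $Y$ via $\nu$ and hence is a common birational model of $X$, $Y$ and $Z$, and transporting the identity back to a model dominating $Z$, one obtains $g\in{\rm Bir}(Z)$ and a relation of divisor classes $g^*L_2\equiv\delta\,L_1-\Delta$, modulo exceptional corrections, with $\Delta$ effective, together with the symmetric statement exchanging the roles of $1$ and $2$. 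The consequence I would extract is that the minimal degree of a hypersurface model of $Z$ lying in the Cremona-equivalence class of $X$ — and, more sharply, the way such a minimal model meets the pluri-adjoint linear systems that compute the plurigenera of $Z$ — is an invariant of that class, governed by how small $\delta\,L_1-\Delta$ can be made as $g$ ranges over ${\rm Bir}(Z)$.

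The construction then consists of exhibiting, for the given $Z$, two models $\varphi_1,\varphi_2$ realizing different values of this invariant. For $\varphi_1$ I would take a model of least available degree — for instance the image of a general linear projection to $\P^n$ of a fixed projectively normal embedding of $Z$, arranged to attain that minimum — so that its Cremona class already contains a model of minimal degree. For $\varphi_2$ I would take a model whose base and singular structure is so rich that no admissible effective correction $\Delta$ can bring its degree down to that of $X$: for example a model obtained by composing $\varphi_1$ with a Cremona transformation of large degree whose base scheme is forced to pass through $\varphi_1(Z)$ in a prescribed way, or the image of a general projection from $\P^{n+1}$ whose pinch-point divisor and associated adjoint/multiplier-ideal data are incompatible with those carried by $X$. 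Proving that the two invariants are genuinely different is exactly where the hypothesis $2\le n\le 15$ enters: it is the range in which the relevant parameter space of divisorial models of $Z$ is unirational, so that the general-position and irreducibility statements needed both to produce the second model and to certify the strict inequality of invariants are available.

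The main obstacle is that, for $n\ge 3$, a Cremona transformation of $\P^n$ is an essentially uncontrolled object — there is no usable normal form — so one cannot hope to enumerate the maps $f$ that might relate $X$ and $Y$. The way around this is not to control $f$ itself but to control its numerical trace through the adjunction identity of the second step: any such $f$ produces an effective $\Delta$ whose ``size'' is bounded by the birational geometry of $Z$, and the second model is engineered so that matching its invariants to those of the first would require a $\Delta$ violating that bound. Verifying this estimate, together with the routine checks that each chosen subsystem $(L_i,V_i)$ is base-point-free and generically injective so that $\varphi_i$ is a birational morphism onto its image, completes the argument.
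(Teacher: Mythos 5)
You should first note that the paper does not prove Theorem (\ref{mella2}) at all: it is quoted from Mella--Polastri \cite{MP12}, and the only internal comment is that in their construction ${\rm deg}\, X \neq {\rm deg}\, Y$. So your sketch has to stand on its own, and as written it is a plan rather than a proof: the decisive steps are missing. The numerical relation you extract from resolving $f$, namely $g^*L_2 \equiv \delta L_1 - \Delta$ with $\Delta$ effective ``modulo exceptional corrections,'' only gives inequalities of the shape ${\rm deg}\, Y \le \delta \cdot {\rm deg}\, X$, and since $\delta$ is unbounded this distinguishes nothing; the entire content of any Noether--Fano-type argument is a \emph{lower} bound on the multiplicities along the base locus, which you never formulate, let alone prove (compare the Appendix, where exactly such a bound, \cite[Theorem 1.4]{Ta98}, does the work in the quartic surface case). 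Likewise, ``the minimal degree of a hypersurface model of $Z$ in the Cremona class of $X$'' is an invariant of the class by definition, but you give no tool for bounding it from below for the second model, and your proposed control ``by the birational geometry of $Z$'' is vacuous precisely in cases the theorem must cover: the statement holds for every smooth $Z$ of dimension $n-1$, e.g.\ $Z=\P^{n-1}$, where ${\rm Bir}\,(Z)$ is as large as possible and $g$ ranges over all of it.

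The construction of $\varphi_2$ is also never actually given, and one of your two suggestions is self-defeating: if $\varphi_2$ factors as $f\circ\varphi_1$ with $f$ a Cremona transformation defined at the generic point of $X=\varphi_1(Z)$ and restricting birationally to $X$ (which is forced if $\varphi_2$ is to be a morphism birational onto its image, since $\varphi_1$ is birational onto $X$), then $X$ and $Y$ are Cremona equivalent \emph{by $f$ itself}, the opposite of what is wanted; the other suggestion (``adjoint/multiplier-ideal data incompatible with those carried by $X$'') merely restates the conclusion to be proved. Finally, the role you assign to the hypothesis $2\le n\le 15$ --- unirationality of a parameter space of divisorial models of $Z$ --- is asserted without statement or proof and is never used in any checkable way; nothing in your sketch would fail for $n\ge 16$, which is a clear sign that the actual mechanism of \cite{MP12}, where the dimension restriction has a genuine function, has not been captured. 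To make this a proof you would need, at minimum: a precisely defined Cremona-equivalence invariant with an effective lower bound valid for arbitrary $Z$ (this is where log-canonical-threshold/Noether--Fano machinery must enter), and an explicit pair of linear systems on $Z$ realizing different values of it.
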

In their construction in Theorem (\ref{mella2}), ${\rm deg}\, X \not= {\rm deg}\, Y$, and therefore, either $X$ or $Y$ has a singular point worse than canonical singularities if $K_Z$ is nef.
 
The aim of this note is to give another negative answer to Question (\ref{truong}) {\it under the stronger constraint that both $X$ and $Y$ are smooth hypersurfaces in $\P^n$ and $X$ and $Y$ are isomorphic as abstract varieties}. Note then that ${\rm deg}\, X = {\rm deg}\, Y$ if $n \ge 3$. 

Our main results are the following:

\begin{theorem}\label{main}
There are smooth quartic K3 surfaces $S_i \subset \P^3$ ($i = 1$, $2$) such that $S_1$ and $S_2$ are isomorphic as abstract varieties but they are not Cremona equivalent in $\P^3$.
\end{theorem}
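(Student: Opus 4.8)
The plan is to realize $S_1$ and $S_2$ as two different quartic models of one and the same K3 surface $S$, with N\'eron--Severi lattice chosen to be both small and rigid, and to obstruct Cremona equivalence by a lattice computation combined with the structure theory of Cremona transformations of $\P^3$.

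\textbf{Step 1 (construction).} I would fix an even lattice $L$ of signature $(1,1)$ and two primitive vectors $h_1,h_2\in L$ with $h_1^2=h_2^2=4$, subject to: (a) $L$ represents neither $0$ nor $-2$; (b) no isometry of $L$ that preserves a fixed positive cone component and acts as $\pm 1$ on the discriminant group carries $h_2$ to $h_1$; (c) a quantitative separation between $h_1$ and the orbit of $h_2$, to be made precise in Step 3. By the surjectivity of the period map there is a complex K3 surface $S$ with $\Pic(S)\cong L$ and transcendental lattice of very general period; then $\mathrm{Aut}(S)$ is identified with the group of isometries of $\Pic(S)$ that preserve the ample cone and act as $\pm 1$ on the discriminant group. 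By (a), $S$ has no smooth rational curve and no elliptic curve, so its nef, positive and pseudo-effective cones all coincide, and by Saint--Donat's theorem each $h_i$ is very ample with $\varphi_{|h_i|}$ a closed embedding of $S$ into $\P^3$ onto a smooth quartic surface $S_i$. Thus $S_1\cong S\cong S_2$ as abstract varieties, and by (b) they are not projectively equivalent.

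\textbf{Step 2 (the Cremona constraint).} Suppose for contradiction that some $f\in{\rm Bir}(\P^3)$, given by degree-$d$ forms $F_0,\dots,F_3$ with $d=\deg f$, is defined at the generic point of $S_1$ and that $f|_{S_1}$ is birational onto $S_2$. Since a birational map between smooth K3 surfaces is an isomorphism, $\phi:=f|_{S_1}\colon S_1\to S_2$ is an isomorphism of varieties. The linear system cut on $S_1$ by $F_0,\dots,F_3$ therefore has a (possibly zero) fixed part $D\ge0$, supported on the curves of $S_1$ contained in the base locus of $f$, and a base-point-free moving part realizing $\phi$; hence in $\Pic(S_1)$
\[
\phi^{*}\sO_{S_2}(1)=d\,\sO_{S_1}(1)\otimes\sO_{S_1}(-D).
\]
If $S_1$ contains no curve lying in the base locus of $f$, then $D=0$, so $d=1$ by primitivity of $\phi^{*}\sO_{S_2}(1)$, and $f|_{S_1}$ extends to a projective transformation --- contradicting Step 1(b). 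The same applied to $f^{-1}$ shows that $S_2$ must contain a curve lying in the base locus of $f^{-1}$.

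\textbf{Step 3 (the lattice contradiction, and the main obstacle).} Transporting through the two embeddings, $\phi$ becomes an automorphism $\psi\in\mathrm{Aut}(S)$ and Step 2 reads, in $L=\Pic(S)$,
\[
\psi^{*}h_2=d\,h_1-D,\qquad h_1=d'\,\psi^{*}h_2-E,\qquad D,E\ \text{effective},
\]
where $d=\deg f$, $d'=\deg f^{-1}$, and, crucially, $D$ and $E$ are effective sums of classes of curves that genuinely arise as components of the base curve of a degree-$d$, resp.\ degree-$d'$, Cremona transformation of $\P^3$, whose degrees, arithmetic genera and numbers of isolated base points are bound together by the homaloidal (Noether-type) relation for $\P^3$. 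Intersecting $(dd'-1)h_1=d'D+E$ with the ample classes $h_1$ and $\psi^{*}h_2$ and using Hodge index ($h_1\cdot\psi^{*}h_2\ge4$) forces $D=E=0$, hence projective equivalence, a contradiction, unless $d,d'\ge 2$. For each of the finitely many $\psi\in\mathrm{Aut}(S)$ and each pair $d,d'\ge 2$ for which $D=d\,h_1-\psi^{*}h_2$ and $E=d'\,\psi^{*}h_2-h_1$ are pseudo-effective, one reads off the degree $D\cdot h_1=4d-h_1\cdot\psi^{*}h_2$ and arithmetic genus $1+\tfrac12 D^{2}$ of the would-be base curve and rules out the finitely many resulting numerical types using known structural constraints on, and the low-degree classification of, Cremona transformations of $\P^3$; condition 1(c) is designed exactly so that this elimination is complete. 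The \emph{main obstacle} is this very last step: producing one lattice $L$ together with an explicit period realizing (a), (b) and (c) \emph{simultaneously}, and then executing the finite case analysis rigorously --- including the bookkeeping for non-reduced or reducible base loci, for base curves meeting $S_1$ non-transversally, and, if $\mathrm{Aut}(S)$ happens to be infinite, for condition (c) in an orbit-theoretic formulation.
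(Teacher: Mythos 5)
Your Steps 1--2 run parallel to the paper (choice of a rank-two lattice without $0$ and $-2$ classes, very general period so that every automorphism acts as $\pm\id$ on the transcendental lattice, Saint-Donat very ampleness, and exclusion of projective equivalence via the discriminant group), and the reduction ``no curve of $S_1$ in the base locus $\Rightarrow d=1 \Rightarrow$ linear'' is sound. But Step 3, which you yourself flag as the main obstacle, is a genuine gap, and the route you sketch to close it cannot work as stated. From $\psi^*h_2=dh_1-D$ with $D$ effective you propose to eliminate ``the finitely many resulting numerical types'' using homaloidal relations and ``the low-degree classification of Cremona transformations of $\P^3$.'' There is no such classification: unlike $\P^2$, birational self-maps of $\P^3$ of degree $\ge 3$ are not classified, ${\rm Bir}(\P^3)$ is not generated by linear and quadratic maps, and the base schemes can be non-reduced and arbitrarily complicated, so the degree and genus of $D$ do not determine, nor are they effectively constrained by, admissible Cremona maps. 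Moreover the case analysis is not finite: $d$ and $d'$ range over an unbounded set, and since the effective cone of $S$ is the full closed positive cone, the classes $dh_1-\psi^*h_2$ are effective for all large $d$, so your condition (c) has no finite list to act on. Nothing in your argument bounds $d$ or the degree of the curves in $D$.

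What is needed at exactly this point is a Noether--Fano (maximal singularity) argument, and this is what the paper imports from Takahashi: applying the Noether--Fano inequality to the klt pair $(\P^3,(1-\epsilon)S)$, any $\varphi\in{\rm Bir}(\P^3)\setminus{\rm Aut}(\P^3)$ with $\varphi_*S=S'$ forces a maximal center which must be an irreducible reduced curve $C\subset S$ with ${\rm deg}\,C<16$ and $[C]$ independent of $H|_S$ in ${\rm NS}(S)$ (Proposition (\ref{takahashi}), proved in the Appendix). The paper then chooses the lattice with $(h_1,h_2)_S=4\ell$, $\ell\ge 5$, so that $|\det{\rm NS}(S)|=16(\ell^2-1)>16^2$, while the sublattice $\langle C, h\rangle$ would have $0<|\det|\le (C,H)^2<16^2$ yet be divisible by $|\det{\rm NS}(S)|$ --- a contradiction (Lemma (\ref{lem48})). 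This is the precise, provable form of your ``quantitative separation'' (c): your instinct for the role of the lattice discriminant is right, but without the Noether--Fano degree bound $<16$ on a single irreducible curve there is no finite elimination to perform, and your proposal supplies no substitute for that bound. (A secondary issue: you assume ${\rm Aut}(S)$ is finite in Step 3; for lattices of this shape it need not be, and the paper never needs such finiteness --- it only needs that no automorphism sends $h_1$ to $h_2$, proved via the discriminant group.)
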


\begin{theorem}\label{main3}
\par
\noindent
\begin{enumerate}
\item Let $S$ be a surface. Then, the following (a) and (b) are equivalent: 
\begin{enumerate}
\item $S$ is a smooth K3 surface with two very ample divisors $h_1$ and $h_2$ 
such that
$$((h_i, h_j)_S) = \left(\begin{array}{rr}
4 & 6\\
6 & 4
\end{array} \right)\,\, .$$
\item $S$ is isomorphic to a smooth complete intersection of four hypersurfaces $Q_k$ ($k=1$, $2$, $3$, $4$) of bidegree $(1, 1)$ of $P := \P^3 \times \P^3$:
$$S = Q_1 \cap Q_2 \cap Q_3 \cap Q_4 \subset P\,\, ,$$ 
such that the $i$-th projection $p_i|_S : S \to S_i := p_i(S) \subset \P^3$ ($i=1$, $2$) is an isomorphism and is given by the complete linear system $|h_i|$. 
\end{enumerate}
Moreover, under this equivalence, the surfaces $S_i \subset \P^3$ ($i=1$, $2$) are Cayley's K3 surfaces (\cite{Ca70}) in the sense of \cite{FGGL13}, i.e., determinantal smooth quartic surfaces.

\item For any surface $S$ in (1)(b), set $V := Q_1 \cap Q_2 \cap Q_3 
\subset P$.
Then
$$\tau := p_2|_V \circ (p_1|_V)^{-1} \in {\rm Bir}\, (\P^3) \setminus {\rm Aut}\, (\P^3)\,\, ,$$
and $S_i$ ($i = 1$, $2$) are Cremona isomorphic under $\tau$. 

\item If $Q_k$ ($k =1$, $2$, $3$, $4$) are very general hypersurfaces of bidegree $(1, 1)$ in $P = \P^3 \times \P^3$, then $S := Q_1 \cap Q_2 \cap Q_3 \cap Q_4$ satisfies the condition (1)(b) and the surfaces $S_i \subset \P^3$ ($i=1$, $2$) in (1)(b) are smooth quartic surfaces which are Cremona isomorphic but not projectively equivalent in $\P^3$. 
\end{enumerate}
\end{theorem}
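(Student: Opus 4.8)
\textbf{Part (1).} The direction (b)$\Rightarrow$(a) is routine: for a smooth complete intersection $S$ of four members of $|\sO_P(1,1)|$ in $P=\P^3\times\P^3$, the Koszul complex (with $K_P=\sO_P(-4,-4)$ and $H^*(\P^3,\sO(-k))=0$ for $k=1,2,3$) gives $K_S=\sO_S$ and $h^1(\sO_S)=0$, so $S$ is K3; the two pullback hyperplane classes $h_i=p_i^*\sO_{\P^3}(1)|_S$ have intersection numbers $\left(\begin{array}{rr}4&6\\6&4\end{array}\right)$ by a computation in $H^*(P)$, and are very ample because each $p_i|_S$ is a closed embedding. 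For (a)$\Rightarrow$(b): Riemann--Roch and Kodaira vanishing give $\dim H^0(S,h_i)=4$, so $|h_i|$ embeds $S$ as a smooth quartic $S_i\subset\P^3$, and $(\varphi_{|h_1|},\varphi_{|h_2|})$ embeds $S$ in $P$ as a surface $S'$. The $(1,1)$--forms through $S'$ form the kernel of the multiplication map $\mu:H^0(h_1)\otimes H^0(h_2)\to H^0(h_1+h_2)$, whose source has dimension $16$ and whose target has dimension $12$ (Riemann--Roch, higher cohomology vanishing); once $\mu$ is surjective, the $4$--dimensional kernel provides four $(1,1)$--divisors whose complete intersection has cohomology class $(a+b)^4=[S']$ and contains $S'$, hence equals $S'$, proving (b). Surjectivity of $\mu$ --- equivalently $H^1(S,M_{h_1}\otimes h_2)=0$ for the syzygy bundle $M_{h_1}$ --- is the technical heart here; I would obtain it from the very ampleness of \emph{both} $h_1$ and $h_2$ and the vanishings $H^*(S,\pm(h_1-h_2))=0$ (the classes $\pm(h_1-h_2)$ are non-effective since $(h_1-h_2)\cdot h_1<0$ and $(h_2-h_1)\cdot h_2<0$, and $2h_1-h_2$ is non-effective by a Saint-Donat--type argument using very ampleness), together with stability of $M_{h_1}$; equivalently this is Beauville's/Cayley's criterion for the quartic $S_1$ to be determinantal applied to the degree--$6$ line bundle $(\varphi_{|h_1|})_*h_2$ on $S_1$. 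Finally, writing the four $(1,1)$--forms via constant $4\times4$ matrices $A_k$ as $\sum_{i,j}(A_k)_{ij}x_iy_j$ and setting $M(\x)=(\sum_i(A_k)_{ij}x_i)_{k,j}$, the fact that $p_1|_S$ is an isomorphism forces $\ker M(\x)$ to be one--dimensional along $S_1=\{\det M=0\}$ and zero off it; so $S_1$, and symmetrically $S_2$, is a smooth corank--one degeneracy locus, i.e. a Cayley K3 surface (\cite{FGGL13}).

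\textbf{Part (2).} With $V=Q_1\cap Q_2\cap Q_3$ and $M'(\x)$ the $3\times4$ matrix of the first three rows of $M(\x)$, the fibre of $p_1|_V$ over $\x$ is $\P(\ker M'(\x))$. The common zero locus of the four maximal minors of $M'$ lies in the irreducible quartic surface $S_1$, which contains no surface of smaller degree; hence these four cubic forms have no common factor, so $\rank M'(\x)=3$ generically, $p_1|_V$ (and likewise $p_2|_V$) is a birational morphism onto $\P^3$, and $\tau=p_2|_V\circ(p_1|_V)^{-1}$ is a Cremona transformation of $\P^3$ of degree $3$, whence $\tau\notin{\rm Aut}(\P^3)$. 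For general $\x\in S_1$, $\rank M(\x)=\rank M'(\x)=3$, so $\ker M'(\x)=\ker M(\x)$ and $(p_1|_V)^{-1}(\x)=(p_1|_S)^{-1}(\x)\in S$; thus $\tau|_{S_1}$ agrees generically --- hence, since its indeterminacy locus is a curve and $S_1$ a surface, everywhere --- with the isomorphism $p_2|_S\circ(p_1|_S)^{-1}:S_1\to S_2$, so $S_1$ and $S_2$ are Cremona isomorphic under $\tau$.

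\textbf{Part (3).} For general $A_k$ the two $4\times4$ matrices of linear forms governing the projections are general, so their corank-$\ge2$ loci (of expected codimension $4>3$) are empty; then $S=Q_1\cap\cdots\cap Q_4$ is smooth (Bertini) and K3 (Koszul), and $p_1|_S,p_2|_S$ are isomorphisms onto smooth quartics $S_1,S_2$ embedded by the complete systems $|\sO_{\P^3}(1)|$, so $S$ satisfies (b); by (2) the $S_i$ are Cremona isomorphic. Since each $S_i$ is embedded by $|h_i|$, a projective equivalence $S_1\to S_2$ is the same as an automorphism $\psi\in{\rm Aut}(S)$ with $\psi^*h_2=h_1$; I will exclude these. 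By (b)$\Rightarrow$(a), $\Pic(S)\supseteq L:=\Z h_1\oplus\Z h_2$ with Gram matrix $\left(\begin{array}{rr}4&6\\6&4\end{array}\right)$; by the Noether--Lefschetz theorem (big monodromy of this complete-intersection family in $P$) the very general $S$ has $\Pic(S)=L$ exactly, and the only Hodge isometries of its transcendental lattice are $\pm\id$. A direct computation gives exactly two isometries of $L$ sending $h_2$ to $h_1$ --- the interchange $\rho_1$, and $\rho_2:h_1\mapsto3h_1-h_2,\ h_2\mapsto h_1$ --- and on the discriminant group $A_L\cong\Z/2\oplus\Z/10$ neither acts as $+\id$ or $-\id$. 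Hence, by the gluing criterion for extending $\rho_i\oplus(\pm\id)$ to an isometry of the unimodular lattice $H^2(S,\Z)$ together with the global Torelli theorem, neither $\rho_i$ is induced by an automorphism of $S$; so no such $\psi$ exists, and $S_1,S_2$ are not projectively equivalent.

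The steps I expect to be the main obstacles are the surjectivity of $\mu$ in Part (1) --- equivalently the Cayley/determinantal structure of the $S_i$, where the very ampleness of both $h_1$ and $h_2$ is essential --- and, in Part (3), the discriminant-form computation that the two isometries interchanging $h_1$ and $h_2$ act nontrivially on $A_L$, which together with the Noether--Lefschetz input is exactly what keeps the Cremona isomorphism from being projective.
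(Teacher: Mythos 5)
Your overall architecture coincides with the paper's (embed $S$ by $\Phi_{|h_1|}\times\Phi_{|h_2|}$, exhibit the determinantal/Cayley structure, produce $\tau$ from $V=Q_1\cap Q_2\cap Q_3$, then Noether--Lefschetz plus a discriminant-form argument for (3)), but two steps are glossed in a way that hides the real work. First, in Part (1), your argument that the four $(1,1)$-forms in $\ker\mu$ cut out $\Phi(S)$ --- ``the complete intersection has class $(H_1+H_2)^4=[S']$ and contains $S'$, hence equals it'' --- is only valid once you know the intersection $W$ of the four divisors is proper, i.e.\ of pure dimension $2$; and your later justification that $\ker M(\x)$ is one-dimensional along $S_1$ ``because $p_1|_S$ is an isomorphism'' is circular, since a priori $W$ could be strictly larger than $\Phi(S)$ with bigger fibres. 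The paper settles exactly this point first: it proves $S_1=(\det M(\x)=0)$ directly, uses smoothness of this determinantal quartic and the Jacobian criterion (cofactor expansion of $\det M$) to get $\rank M(\x)=3$ at every point of $S_1$, deduces that $W$ has one-point fibres over $S_1$, hence pure dimension $2$, and then proves $W=\Phi(S)$ as schemes by a Hilbert-polynomial comparison ($h^0=10n^2+2$ on both sides); see Proposition (\ref{lem32}). You need some version of this. As for the surjectivity of $\mu$, which you rightly flag as the technical heart, the paper does not reprove it: it quotes Beauville's resolution of $\sO_S(h_2)$ on $S_1$ (\cite[Corollary 6.6, Proposition 1.11]{Be00}), which hands over $M(\x)$ and the surjectivity at once; citing this is simpler than the syzygy-bundle/stability route you sketch.

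In Part (2) your route via the $3\times4$ matrix $M'(\x)$ and its cubic minors is a genuinely different and attractive argument: it shows $\tau$ is given by four cubics with no common factor (so $\deg\tau=3$, hence $\tau\notin{\rm Aut}(\P^3)$), whereas the paper argues via $((H_1|_V)^3)_V\neq((H_1|_V)^2H_2|_V)_V$, and the containment of the corank-$\ge2$ locus of $M'$ in $S_1$ does give birationality of $p_1|_V$ and the no-common-factor claim. The gap is the last step: identifying $\tau|_{S_1}$ with $p_2|_S\circ(p_1|_S)^{-1}$ requires $\rank M'(\x)=3$ at the generic point of $S_1$, which you assert without proof; a priori the corank-$\ge2$ locus of $M'$ could contain all of $S_1$, in which case $\tau$ would be undefined along $S_1$ and the Cremona-isomorphism claim would not follow. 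This is precisely where the paper's normality and irreducibility of $V$ (deduced from smoothness of $S=V\cap Q_4$ and ampleness of $Q_4$, then Zariski's main theorem) enter; with $V$ irreducible of dimension $3$ and $p_1|_V$ surjective, a dimension count excludes positive-dimensional fibres over all of $S_1$, and your argument closes. Part (3) is essentially the paper's proof: the paper gets ${\rm NS}(S)=\Z h_1\oplus\Z h_2$ by applying Voisin's Noether--Lefschetz theorem to $S\in|(H_1+H_2)|_V|$ on the Fano threefold $V$ (Proposition (\ref{prop62})), which is cleaner than an appeal to big monodromy, and your enumeration of the two isometries sending $h_2$ to $h_1$ and the check that neither acts as $\pm\id$ on the discriminant group ($\Z_2\oplus\Z_2\oplus\Z_5\simeq\Z_2\oplus\Z_{10}$, so the groups agree) matches Lemmas (\ref{lem21}), (\ref{lem22}), with the input $f^*\sigma_S=\pm\sigma_S$ supplied there by \cite{FGGL13} for every $S$ with this N\'eron--Severi lattice rather than by very generality.
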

\begin{remark}\label{veryspecial} 
\par
\noindent
\begin{enumerate}
\item We call a quartic surface $S \subset \P^3$ (linear) determinantal if $S = (\det M({\mathbf x}) = 0)$ for some $4 \times 4$ matrix $M({\mathbf x})$ whose entries are homogeneous linear forms of the homogeneous coordinates ${\mathbf x} = [x_1 : x_2 :x_3 : x_4]$ of $\P^3$. 
\item Theorem (\ref{main3})(1) states the result for {\it fixed} polarizations $h_1$ and $h_2$. So, as the referee pointed out, it is stronger than what we actually need for Theorem (\ref{main3})(3). I believe that Theorem (\ref{main3})(1) has its own interest and some other applications besides Theorem (\ref{main3})(3); See eg. the second paragraph after Proposition (\ref{takahashi}). Theorem (\ref{main3})(1), (2) are valid over any algebraically closed field $k$, as our proof shows. 
\item The condition {\it very general} in Theorem (\ref{main3})(3) will be made more explicit by Propositions (\ref{prop62}), (\ref{prop63}) in Section \ref{sect6}. 
\end{enumerate}
\end{remark}
Our proof of Theorem (\ref{main}) is indirect. In fact, as in \cite{Og13}, we prove Theorem (\ref{main}) by combining standard results on K3 surfaces with the following special case of a more general theorem due to Takahashi (\cite[Theorem 2.3, Remark 2.4]{Ta98}), whose proof, being based on the Noether-Fano 
inequality (\cite[Theorem 1.4]{Ta98}), is given in Appendix:
\begin{proposition}\label{takahashi}
Let $S, S' \subset \P^3$ be smooth quartic K3 surfaces 
and $\varphi \in {\rm Bir}\,(\P^3)$ such that $\varphi_*S =S'$. Here $\varphi_*S$ is the Zariski closure of the image $\varphi(\eta_S)$ of the generic point $\eta_S \in S$ in $\P^3$. Assume that $\varphi \not\in {\rm Aut}\,(\P^3)$. Then, there is an irreducible reduced curve $C \subset S$ such that ${\rm deg}\, C := (C, H)_{\P^3} < 16$ and the classes $C$ and $H|_S$ are linearly independent in ${\rm NS}\, (S)$. Here $H$ is the hyperplane class of $\P^3$. 
\end{proposition}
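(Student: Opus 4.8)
The plan is to apply the Noether--Fano inequality to $\varphi$ and to read off the curve $C$ from the base scheme of the moved linear system $\mathcal{H}:=\varphi^{-1}_*|\mathcal{O}_{\P^3}(1)|$. Write $\mathcal{H}\subset|\mathcal{O}_{\P^3}(n)|$. I would begin with two routine reductions: since $S$ and $S'$ are smooth minimal surfaces of non-negative Kodaira dimension, the birational map $\varphi|_S\colon S\dasharrow S'$ is in fact an isomorphism; and since $\varphi\notin{\rm Aut}(\P^3)={\rm PGL}(4,\C)$, one has $n\ge 2$ (a $3$-dimensional subsystem of $|\mathcal{O}_{\P^3}(1)|$ equals $|\mathcal{O}_{\P^3}(1)|$, which induces a projectivity). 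Restricting $\mathcal{H}$ to $S$: the composite $S\to S'\hookrightarrow\P^3$ (via $\varphi|_S$) is the closed embedding of the quartic surface $S'$, hence is given by a base-point-free complete linear system $|M|$ with $M:=(\varphi|_S)^*(H|_{S'})$, so $M^2=\deg S'=4$ and, by Riemann--Roch on the K3 surface $S$, $\dim|M|=3$. Therefore $\mathcal{H}|_S=|M|+F$ with $F:=nH|_S-M$ effective, and $F\neq 0$ (else $M\equiv nH|_S$ and $4=M^2=4n^2$ would force $n=1$); moreover, $|M|$ being base-point-free, ${\rm Bs}(\mathcal{H})\cap S={\rm Supp}(F)$. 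So $\mathcal{H}$ already has a curve of base points on $S$, and I will take $C$ to be a component of $F$.

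The key input is the Noether--Fano inequality (\cite[Theorem 1.4]{Ta98}), which I would apply to $\varphi$ viewed as a crepant-birational self-map of the log Calabi--Yau pair $(\P^3,S)$ --- note $K_{\P^3}+S\equiv 0$, as $S\in|\mathcal{O}_{\P^3}(4)|$. Since $\varphi$ is not biregular, the pair $(\P^3,\tfrac4n\mathcal{H})$ is not canonical, and I would arrange the resulting maximal singularity to be an irreducible curve $\Gamma$ with $\Gamma\subset{\rm Bs}(\mathcal{H})$ and ${\rm mult}_\Gamma\mathcal{H}>n/4$ (an isolated point-center is turned into a curve-center by one blow-up; the crepancy of $(\P^3,S)$ is what forces a curve-center onto $S$). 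Now I intersect two general members $D_1,D_2\in\mathcal{H}$: the $1$-cycle $D_1\cdot D_2$ has degree $n^2$ and contains $\Gamma$ with coefficient $\ge({\rm mult}_\Gamma\mathcal{H})^2>n^2/16$, whence $n^2>(n^2/16)\,(\Gamma,H)_{\P^3}$, i.e. $\deg\Gamma=(\Gamma,H)_{\P^3}<16$. Since $\Gamma\subset{\rm Bs}(\mathcal{H})\cap S={\rm Supp}(F)$, the curve $\Gamma$ lies on $S$; I set $C:=\Gamma$.

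To see that $C$ and $H|_S$ are linearly independent in ${\rm NS}(S)$: because $C$ occurs in the fixed part of $\mathcal{H}|_S$ and a general $M'\in|M|$ avoids $C$, its multiplicity $a$ in $F=nH|_S-M$ equals ${\rm mult}_C(\mathcal{H}|_S)\ge{\rm mult}_C\mathcal{H}>n/4$. If $C$ were proportional to $H|_S$, I would untwist $\varphi$ along $C$ by a volume-preserving Sarkisov link of the pair $(\P^3,S)$, which strictly decreases $\deg\mathcal{H}$; a descending induction on $\deg\mathcal{H}$ then either produces a maximal singularity not proportional to $H|_S$ (the desired $C$) or terminates with $\deg\mathcal{H}=1$, i.e. $\varphi\in{\rm Aut}(\P^3)$, contrary to hypothesis. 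When ${\rm NS}(S)$ has rank one this takes a concrete form: every component of $F$ is then a multiple of $H|_S$, so $M\equiv H|_S$; composing $\varphi$ with a projectivity makes $\varphi$ restrict to the identity on the quartic $S$, and comparing the degree-$n$ forms defining $\varphi$ modulo the defining quartic of $S$ forces $\varphi\in{\rm Aut}(\P^3)$, so in that case the proposition is vacuously true.

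The main obstacle is the fine birational analysis concealed in the phrases ``arrange the maximal singularity to be a curve on $S$'' and ``untwist $\varphi$ along $C$'': one must control, using crucially that $S$ and $S'$ are \emph{quartics} (so that $\varphi$ is volume-preserving for the pair $(\P^3,S)$), both the position of the Noether--Fano center relative to $S$ and its numerical class on $S$. This is the substance of \cite[\S\S1--2]{Ta98}; by contrast the bound $\deg C<16$ itself, once a curve-center on $S$ is available, is just the B\'ezout computation above.
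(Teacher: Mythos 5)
Your overall skeleton --- Noether--Fano to produce a curve center of high multiplicity lying on $S$, then a B\'ezout count giving $\deg C<16$ --- is the same as the paper's, and the B\'ezout step itself is fine. But the two claims you merely ``arrange'' are the actual content of the proof, and your justifications for them do not work. \cite[Theorem 1.4]{Ta98} is a Noether--Fano inequality for \emph{klt} pairs, so it cannot be fed the crepant (only lc) pair $(\P^3,S)$; and the two natural ways to read your step both fail. If you apply the classical Noether--Fano to $(\P^3,\tfrac4n\mathcal H)$ with no boundary, you get a maximal center with the multiplicity bound but with no reason whatsoever to lie on $S$; your parenthetical ``an isolated point-center is turned into a curve-center by one blow-up'' is not an argument --- blowing up does not move the center onto $S$, and the B\'ezout count needs ${\rm mult}_\Gamma\mathcal H$ along an actual curve $\Gamma\subset\P^3$, not an infinitely near one. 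If instead you insert the full boundary $S$, then \emph{every} curve on $S$ met by ${\rm Bs}(\mathcal H)$ is already a non-canonical center (the log discrepancy of the blow-up of a curve on $S$ with respect to $K_{\P^3}+S$ is $0$), and the crucial inequality ${\rm mult}_\Gamma\mathcal H>n/4$ is lost. The paper resolves exactly this tension with the klt perturbation $(\P^3,(1-\epsilon)S)$ with $0<4\epsilon<1$: the explicit estimates $2-(1-\epsilon)-4\epsilon>0$ (point center) and $1-4\epsilon>0$ (curve center not on $S$) exclude all centers except curves $F\subset S$, and for those ${\rm ord}_E(D(a\epsilon))=\epsilon(1-am)<0$ forces $am>1$, i.e. $m={\rm mult}_F\sL>n/4$ (here $a=4/\deg\sL$). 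Some such device is indispensable; as written, your $\Gamma$ need not lie on $S$ and need not satisfy the multiplicity inequality simultaneously.

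The linear-independence step is an even larger gap, and here you depart from the paper onto a route that does not close. The proposed untwisting of $\varphi$ along a center proportional to $H|_S$ by a volume-preserving Sarkisov link, with descending induction on $\deg\mathcal H$, is not carried out: you do not construct the link; a link of $\P^3$ centered at such a curve need not return to a pair $(\P^3,S'')$ with $S''$ a smooth quartic (volume-preservation controls the boundary only crepantly, not its degree or smoothness), so the induction may leave the setting in which ``$\deg\mathcal H$'' and the statement make sense; termination is not addressed; and a curve produced at a later stage lives on a different surface in a different model, so it does not deliver what Proposition (\ref{takahashi}) must provide for Lemmas (\ref{lem48})--(\ref{lem49}), namely a curve on the \emph{original} $S$ of degree $<16$ independent of $H|_S$. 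The paper replaces all of this by a short direct argument: if the center $F$ were proportional to $H|_S$, then $F=bH|_S$ in ${\rm Pic}(S)$, surjectivity of $H^0(\P^3,\sO_{\P^3}(b))\to H^0(S,\sO_S(bH|_S))$ gives a surface $T$ of degree $b$ with $F=S\cap T$, and intersecting the strict transform of $\sL$ with the proper transform of $T\cap H$ for a general plane $H$ yields $0\le(1-am)(\sL\cdot T\cdot H)_{\P^3}<0$, contradicting $am>1$. Your rank-one fallback is likewise incomplete: from $f_ix_j\equiv f_jx_i$ modulo the quartic $q$ one may conclude only $f_i\equiv ux_i \pmod q$, i.e. $\varphi=u\cdot{\rm id}+q\cdot G$; excluding nonlinear birational maps of this shape is again the substance of the theorem, not a formal consequence of comparing forms.
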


Theorem (\ref{main3}) is much inspired by several constructive comments from J\'anos Koll\'ar (\cite{Ko16}) on the first version of this note and beautiful works by Beauville \cite[Corollay 6.6]{Be00} on a characterization of Cayley's K3 surfaces and by D. Festi, A. Garbagnati, B. van Geemen and R. van Luijk \cite{FGGL13} on automorphisms of Cayley's K3 surfaces of Picard number $2$ (See also \cite{CT07}, \cite{BHK13} for interesting relevant works). So, the surfaces $S_i$ are known ones. The novelty of Theorems (\ref{main3}) is to provide examples of pairs of smooth quartic surfaces that are Cremona isomorphic but not projectively equivalent in geometrically simple and concrete terms. 

Quite recently, I. Shimada also informed me T. Shioda's observation that the Fermat quartic K3 surface $F$, which contains exactly 48 lines, has yet another smooth quartic surface model $F^*$ which contains exactly 56 lines (\cite{SS16}, \cite{Sh16}). It is then clear that they are isomorphic but not projectively equivalent. The corresonding two polarizations satisfy the condition in Theorem (\ref{main3}) (1). So, some isomorphism beteween $F$ and $F^*$ can be obtained as a Cremona transformation via a suitable complete interesection $\tilde{F}$ in $P$ as in Theorem (\ref{main3}) (1), (2). It may be interesting to find an explicit equations of $\tilde{F} \subset P$ and explicit determinantal descriptions of $F$ and $F^*$.  

We note that the following classical result, essentially due to Matsumura-Monsky \cite{MM63}, may justify our restriction to smooth quartic surfaces in $\P^3$ in Theorems (\ref{main}), (\ref{main3}): 
\begin{theorem}\label{mm}
Let $X$ and $Y$ be smooth hypersurfaces of $\P^n$ of degree $d$. 
Assume that $n \ge 3$ and $(n, d) \not= (3, 4)$.  Then $X$ and $Y$ are projectively equivalent, in particular, $X$ and $Y$ are Cremona isomorphic, if $X$ and $Y$ are isomorphic as abstract varieties. 
\end{theorem}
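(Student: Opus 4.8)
The plan is to reduce the whole statement to the assertion that the hyperplane class $\mathcal{O}_X(1)$ is an \emph{intrinsic} invariant of $X$, i.e.\ that every abstract isomorphism $f : X \to Y$ satisfies $f^{*}\mathcal{O}_Y(1) \cong \mathcal{O}_X(1)$. Granting this, the theorem is formal. One may assume $d \geq 2$, since for $d = 1$ every smooth hypersurface is a hyperplane and all hyperplanes of $\P^n$ are projectively equivalent. For $d \geq 2$ the exact sequence $0 \to \mathcal{O}_{\P^n}(1-d) \to \mathcal{O}_{\P^n}(1) \to \mathcal{O}_X(1) \to 0$ together with $H^0(\P^n, \mathcal{O}(1-d)) = H^1(\P^n, \mathcal{O}(1-d)) = 0$ (here $n \geq 3 > 1$) shows that the restriction map $H^0(\P^n, \mathcal{O}(1)) \to H^0(X, \mathcal{O}_X(1))$ is an isomorphism, so the embedding $X \hookrightarrow \P^n$, given by the very ample line bundle $\mathcal{O}_X(1) = \mathcal{O}_{\P^n}(1)|_X$, is the one defined by the complete linear system $|\mathcal{O}_X(1)|$; the same holds for $Y$. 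Hence an isomorphism $f : X \to Y$ with $f^{*}\mathcal{O}_Y(1) \cong \mathcal{O}_X(1)$ induces an isomorphism $H^0(Y, \mathcal{O}_Y(1)) \to H^0(X, \mathcal{O}_X(1))$, which projectivizes to an element $g \in {\rm PGL}(n+1, \C) = {\rm Aut}(\P^n)$ with $g(X) = Y$ and $g|_X = f$. Thus $X$ and $Y$ are projectively equivalent; in particular they are Cremona isomorphic, since ${\rm Aut}(\P^n) \subset {\rm Bir}(\P^n)$ and a projective transformation restricts to an isomorphism on any subvariety.

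It remains to show that $\mathcal{O}_X(1)$ is intrinsic. Since $n \geq 3$, the Lefschetz hyperplane theorem gives $\pi_1(X) \cong \pi_1(\P^n) = 0$; consequently $H^1(X, \mathcal{O}_X) = 0$ and $H^2(X, \Z)$ is torsion-free, so $\Pic(X) = {\rm NS}(X)$ is a finitely generated free abelian group. If $n \geq 4$, then $\dim X \geq 3$ and the Grothendieck--Lefschetz theorem gives $\Pic(\P^n) \cong \Pic(X)$; hence $\Pic(X) \cong \Z$ with $\mathcal{O}_X(1)$ its ample generator, a characterization visibly preserved by $f$ for every $d$. If $n = 3$, so $X$ is a surface, adjunction gives $\omega_X \cong \mathcal{O}_X(d-4)$; as $(n,d) \neq (3,4)$ we have $d \neq 4$, so $\omega_X$ (when $d \geq 5$) or $\omega_X^{-1}$ (when $d \leq 3$) is a positive multiple of the ample class $\mathcal{O}_X(1)$, and since $\Pic(X)$ is torsion-free, $\mathcal{O}_X(1)$ is the \emph{unique} line bundle $L$ on $X$ with $L^{\otimes|d-4|}$ isomorphic to $\omega_X$, resp.\ to $\omega_X^{-1}$. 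This too is a characterization internal to $X$, hence preserved by $f$, and the reduction is complete.

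The only non-routine point is the one just used: the polarization $\mathcal{O}_X(1)$ is pinned down by the canonical bundle whenever $d \neq n+1$, and by the Picard rank whenever $\dim X \geq 3$. Both mechanisms break down exactly when $X$ is a smooth quartic surface in $\P^3$: there $\omega_X$ is trivial, and ${\rm NS}(X)$ may have rank larger than $1$, carrying several ample classes of self-intersection $4$ that are not related by ${\rm Aut}(\P^3)$. This is precisely why the hypothesis $(n,d) \neq (3,4)$ cannot be dropped, and it is the very phenomenon exploited in Theorems (\ref{main}) and (\ref{main3}). The remaining ingredients — the cohomology vanishings, linear normality, very-ampleness of $\mathcal{O}_X(1)$, and the standard passage from a polarized isomorphism to a projective transformation — involve no real difficulty, so I would dispatch them briefly.
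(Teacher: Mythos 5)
Your argument is correct, and it fills a gap the paper deliberately leaves open: the paper offers no proof of Theorem (\ref{mm}) at all, citing it as ``essentially due to Matsumura--Monsky'' [MM63]. Your route is the standard one and is essentially what that classical reference does: reduce everything to showing that $\mathcal{O}_X(1)$ is intrinsic, using linear normality (the Euler-type sequence plus $H^0=H^1=0$ for $\mathcal{O}_{\P^n}(1-d)$) to lift a polarized isomorphism to an element of ${\rm PGL}(n+1,\C)$, and then pinning down $\mathcal{O}_X(1)$ either by Grothendieck--Lefschetz ($\Pic(X)\cong\Z$ when $\dim X\ge 3$) or, for surfaces in $\P^3$ with $d\ne 4$, as the unique $|d-4|$-th root of $\omega_X^{\pm 1}$ in the torsion-free Picard group. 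All the individual steps check out (simple connectedness giving torsion-freeness of $\Pic(X)$, adjunction $\omega_X\cong\mathcal{O}_X(d-4)$, uniqueness of roots, and the passage from an isomorphism of $H^0$'s to a projective transformation carrying $X$ to $Y$), and your closing remark correctly identifies why both mechanisms fail exactly for quartic surfaces in $\P^3$, which is the phenomenon Theorems (\ref{main}) and (\ref{main3}) exploit. The only thing you prove beyond the stated theorem is slightly stronger and worth noting: every abstract isomorphism $X\to Y$ extends to a projective transformation, not merely that some projective equivalence exists.
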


{\bf Acknowledgements.} I would like to express my thanks to Professors Tuyen Truong, Massimiliano Mella, J\'anos Koll\'ar, Xun Yu, Ichiro Shimada for valuable e-mail correspondences and their interests in this work with many constructive comments and corrections. 

\section{Notation and preliminary results.}\label{sect2} 

Throughout this note, we denote $L \otimes_{\Z} K$ by $L_K$ for a $\Z$-module $L$ and a $\Z$-algebra $K$. We denote the cyclic group of order $n$ by $\Z_n$. We call a closed point $P$ of a variety $V$ {\it general} (resp. {\it very general}) if $P$ belongs to the complement of the union of finitely many (resp. countably many) prescribed closed proper subvarieties of $V$. We denote by $\eta_V$ the generic point of the corresponding irreducible reduced scheme of $V$. 

Let $S$ be a projective K3 surface. We denote by $\sigma_S$ a non-zero holomorphic $2$-form on $S$ and by ${\rm NS}\, (S)$ the N\'eron-Severi group of $S$. The Picard group ${\rm Pic}\, (S)$ is isomorphic to ${\rm NS}\, (S)$ by the cycle map. We identify ${\rm Pic}\, (S)$ with the sublattice ${\rm NS}\, (S)$ of $H^2(S, \Z)$ with the intersection form $(*,**)_S$. The lattice $(H^2(S, \Z), (*,**)_S)$ is isomorphic to the K3 lattice $\Lambda_{{\rm K3}} := U^{\oplus 3} \oplus E_8(-1)^{\oplus 2}$. We denote the self-intersection number $(x, x)_S$ by $(x^2)_S$. The orthogonal complement 
$$T(S) := {\rm NS}\, (S)^{\perp}_{H^2(S, \Z)}$$ 
of ${\rm NS}\, (S)$ in $H^2(S, \Z)$ is the transcendental lattice. $T(S)$ is then the minimal primitive sublattice $T$ of $H^2(S, \Z)$ such that $H^0(S, \Omega_S^2) = \C \cdot \sigma_S \subset T_{\C}$. The dual lattice of ${\rm NS}\, (S)$ is 
$${\rm NS}\, (S)^* := \{x \in {\rm NS}\, (S)_{\Q}\, \vert \, (x, {\rm NS}\,(S))_S \subset \Z \}\,\, .$$ 
We have a natural inclusion ${\rm NS}\,(S) \subset {\rm NS}\,(S)^*$ and similarly for $T(S) \subset T(S)^*$. As the lattice $(H^2(S, \Z), (*,**)_S)$ is unimodular, there is a natural isomorphism 
$${\rm NS}\, (S)^*/{\rm NS}\, (S) \simeq T(S)^*/T(S)$$ 
which is compatible with the action of ${\rm Aut}\, (S)$ (\cite[Proposition 1.6.1]{Ni79}). 

The positive cone $P(S)$ is the connected component of the subset 
$\{x \in {\rm NS}\, (S)_{\R}\, \vert\, (x^2)_S > 0\}$ 
of ${\rm NS}\, (S)_{\R}$, containing the ample classes. Let $\overline{P}(S)$ 
be the closure of $P(S)$ in the topological vector space ${\rm NS}\, (S)_{\R}$. The nef cone $\overline{{\rm Amp}}\,(S)$ is the closure of the ample cone ${\rm Amp}\,(S)$ in ${\rm NS}\, (S)_{\R}$. Note that $\overline{{\rm Amp}}\,(S) \subset \overline{P}(S)$.

The following lemma is well-known and is proved in several ways (see eg. \cite[Proposition 2.4]{Og14}). 
\begin{lemma}\label{lem43} 
Let $S$ be a projective K3 surface and $g \in {\rm Aut}\, (S)$ such that $g^* \vert_{{\rm NS}\, (S)}$ is of finite order. Then $g$ is of finite order.
\end{lemma}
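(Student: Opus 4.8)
The plan is to pass to the induced action of $g$ on $H^2(S,\Z)$ and then deal with the transcendental part by Hodge theory. Write $g^*$ for the isometry of $(H^2(S,\Z),(*,**)_S)$ induced by $g$. Since $g$ is biregular it preserves the Hodge decomposition, so $g^*$ fixes the line $\C\sigma_S$ and stabilizes both ${\rm NS}\,(S)$ and $T(S)={\rm NS}\,(S)^{\perp}$. The key observation is that $g^*|_{T(S)}$ has finite order for \emph{every} $g\in{\rm Aut}\,(S)$, with no hypothesis at all; granting this, the assumption that $g^*|_{{\rm NS}\,(S)}$ has finite order shows that $g^*$ has finite order on the finite-index sublattice ${\rm NS}\,(S)\oplus T(S)\subset H^2(S,\Z)$, hence on all of $H^2(S,\Z)$, whence $g$ itself has finite order by the global Torelli theorem for K3 surfaces, which gives $\ker\big({\rm Aut}\,(S)\to O(H^2(S,\Z))\big)=\{\id\}$ (the weaker fact that this kernel is finite would also suffice).

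To see that $g^*|_{T(S)}$ has finite order, note that $g^*$ acts on $\C\sigma_S$ by a scalar $\zeta\in\C^{\times}$. The relations $(\sigma_S,\sigma_S)_S=0$ and $(\sigma_S,\bar\sigma_S)_S>0$ make the real plane $\Pi=\langle\mathrm{Re}\,\sigma_S,\mathrm{Im}\,\sigma_S\rangle\subset T(S)_{\R}$ positive-definite and its orthogonal complement in $T(S)_{\R}$ negative-definite; since $g^*$ preserves $\C\sigma_S$ and $\C\bar\sigma_S$ it preserves $\Pi$ and this orthogonal splitting, and a one-line eigenvalue estimate on each definite summand shows that every complex eigenvalue of $g^*|_{T(S)}$ has absolute value $1$. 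As the characteristic polynomial of $g^*|_{T(S)}$ lies in $\Z[x]$, every Galois conjugate of $\zeta$ is among these eigenvalues, so $\zeta$ is an algebraic integer all of whose conjugates lie on the unit circle; by Kronecker's theorem $\zeta$ is a root of unity, of order $m$ say. Then $h:=g^m$ fixes $\sigma_S$, so $F:=\ker\big(h^*|_{T(S)}-\id\big)$ is a primitive sublattice of $H^2(S,\Z)$ whose complexification contains $\C\sigma_S$; by the defining minimality of $T(S)$ among such sublattices we get $F=T(S)$, i.e. $h^*|_{T(S)}=\id$, so $g^*|_{T(S)}$ has order dividing $m$.

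The main obstacle is exactly this transcendental step: a priori an automorphism could carry infinite order entirely inside $T(S)$, and ruling that out uses two classical ingredients — (i) the root-of-unity property of the period eigenvalue $\zeta$, which comes from the definiteness of $\Pi$ and of its orthogonal complement together with Kronecker's theorem, and (ii) the minimality characterization of $T(S)$ recalled in Section \ref{sect2}, which forces a period-fixing Hodge isometry of $T(S)$ to be the identity. Everything else is bookkeeping with the finite-index inclusion ${\rm NS}\,(S)\oplus T(S)\subset H^2(S,\Z)$ and a single appeal to the global Torelli theorem. I would isolate (i) and (ii) as the two facts on which the argument turns, both being standard in the theory of K3 surfaces.
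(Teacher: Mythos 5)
Your proof is correct: the definite splitting $\Pi\oplus\Pi^{\perp}$ of $T(S)_{\R}$, the integrality of the characteristic polynomial plus Kronecker's theorem, and the minimality of $T(S)$ do yield that $g^*|_{T(S)}$ is always of finite order, and combining this with the hypothesis on ${\rm NS}\,(S)$, the finite-index inclusion ${\rm NS}\,(S)\oplus T(S)\subset H^2(S,\Z)$ and the faithfulness of ${\rm Aut}\,(S)\to {\rm O}(H^2(S,\Z))$ gives the lemma. The paper itself offers no proof, only the remark that the lemma is well-known and a citation to \cite{Og14}; your argument is exactly one of the standard proofs being alluded to there, so there is nothing to correct or to contrast.
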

Our references on basic facts on K3 surfaces, their projective models and lattice polarized K3 surfaces are \cite[Chapter VIII]{BHPV04}, \cite{SD74} and \cite[Sections 1-3]{Do96} respectively. 

\section{Proof of Theorem (\ref{main}).}\label{sect3}

In this section we shall prove Theorem (\ref{main}). Let $\ell$ be an integer such that $\ell \ge 5$. Choose and fix such an $\ell$. We freely use the notation introduced in Section \ref{sect2}.

Throughout this section, $S$ is a K3 surface such that ${\rm NS}\, (S) = L$,  
where 
$$ L := \Z h_1 + \Z h_2\,\, ,\,\, 
((h_i, h_j)_S) = \left(\begin{array}{rr}
4 & 4\ell\\
4\ell & 4
\end{array} \right)\,\, .$$ 
As the lattice $L$ is even and of signature 
$(1,1)$, there is a unique primitive embedding $L \to \Lambda_{{\rm K3}}$, up to ${\rm O}(\Lambda_{{\rm K3}})$ and such K3 surfaces $S$ certainly exist (\cite[Corollary 2.9]{Mo84}, \cite[Theorem 3.1]{Ni79}). They are all projective and very general in the $18$-dimensional family of the $L$-polarized K3 surfaces (\cite[Sections 1-3]{Do96}). 

{\it In what follows, we choose and fix a very general $S$ which also enjoys the property in}:

\begin{lemma}\label{lem41}
$g^* \sigma_S = \pm \sigma_S$ and $g^* | T(S) = \pm id$ for all $g \in {\rm Aut}\, (S)$, if $S$ is very general. 
\end{lemma}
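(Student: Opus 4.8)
The plan is to use the standard Hodge-theoretic genericity argument for lattice-polarized K3 surfaces, exploiting that $\mathrm{NS}(S)$ has rank $2$ and $T(S)$ has rank $20$. Fix a very general $S$ in the $18$-dimensional family of $L$-polarized K3 surfaces, where ``very general'' means $S$ lies outside a countable union of proper closed subsets of the period domain that we are free to specify. Given $g\in\mathrm{Aut}(S)$, the induced isometry $g^{*}$ of $H^{2}(S,\Z)$ preserves the Hodge decomposition, hence preserves $H^{2,0}(S)=\C\cdot\sigma_{S}$; equivalently $g^{*}$ preserves $\mathrm{NS}(S)$ and $T(S)$ and acts on $T(S)_{\C}$ with $\C\cdot\sigma_{S}$ an eigenline.

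The first step is to record that $\rho:=g^{*}|_{T(S)}$ is an isometry of the transcendental lattice commuting with the Hodge structure, so its eigenvalue $\lambda$ on $\C\sigma_{S}$ satisfies $|\lambda|=1$ and is an algebraic integer all of whose conjugates have absolute value $1$ (since $\rho$ has finite-rank integral matrix with eigenvalues the Galois conjugates of $\lambda$), hence $\lambda$ is a root of unity, say a primitive $m$-th root. The degree $\varphi(m)$ of $\lambda$ over $\Q$ must divide $\rank T(S)=20$, which already restricts $m$ to a finite list. The key genericity input is that for a very general $S$ the transcendental Hodge structure is ``as generic as possible'': concretely, the only Hodge isometries of $T(S)$ are $\pm\id$. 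I would cite or reproduce the standard argument: if $\rho\neq\pm\id$ then the pair $(T(S),\rho)$ imposes extra algebraic conditions on the period point (the period lies in a proper sub-locus of the $18$-dimensional domain cut out by the eigenspace conditions for $\rho$, and there are only countably many candidate isometries $\rho$ of the fixed lattice $T$), so after removing those countably many loci no such $\rho$ survives. This is exactly the type of statement used to prove that a very general K3 has no nontrivial Hodge isometries beyond $\pm\id$; see the references to Dolgachev and Nikulin cited in the excerpt.

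From $g^{*}|_{T(S)}=\pm\id$ it follows immediately that $g^{*}\sigma_{S}=\pm\sigma_{S}$, which is the first assertion. For completeness one should note the two cases are both realizable in principle but the statement only claims the dichotomy, so nothing further is needed there.

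The main obstacle is making the ``very general'' reduction airtight: one must check that for each of the finitely many relevant orders $m$ and each of the finitely (or countably) many conjugacy classes of finite-order isometries $\rho$ of the abstract lattice $T\cong L^{\perp}_{\Lambda_{\mathrm{K3}}}$ with a primitive $m$-th root of unity as an eigenvalue, the set of periods $\omega\in\P(T_{\C})$ (satisfying $(\omega,\omega)=0$, $(\omega,\bar\omega)>0$) that are eigenvectors of $\rho$ forms a proper closed analytic subset of the period domain $D_{T}$. This is clear because the $\rho$-eigenspace in $T_{\C}$ for the eigenvalue $\lambda\neq 1$ has dimension strictly less than $\dim T_{\C}$ (as $\rho\neq\id$ on $T$, and if $\rho=-\id$ we are in the allowed case; for $\rho\neq\pm\id$ no eigenspace is everything), so the eigenvectors cut out a linear slice of lower dimension, and removing the countable union of such slices (over all $\rho$) leaves a very general set on which $g^{*}|_{T(S)}=\pm\id$ is forced. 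Finally I would remark that this lemma will be combined with Lemma \ref{lem43}: since $g^{*}|_{T(S)}$ has order $\le 2$ and $\mathrm{NS}(S)$ is a hyperbolic rank-$2$ lattice, controlling $g^{*}$ on the whole of $H^{2}$ (hence finiteness of $\mathrm{Aut}(S)$, or its precise structure) reduces to a finite lattice computation, which is presumably the use made of it later in Section \ref{sect3}.
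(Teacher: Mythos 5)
Your proposal follows essentially the same route as the paper: a countability argument in the $18$-dimensional period domain of $L$-polarized K3 surfaces, removing, for each of the countably many isometries $\rho \neq \pm\id$ of the fixed lattice $T \simeq L^{\perp}_{\Lambda_{\rm K3}}$, the proper closed loci where the period is an eigenvector of $\rho$. The paper packages the same idea slightly differently: it removes only the loci attached to eigenvalues $\alpha \neq \pm 1$ (such an $\alpha$ has a Galois conjugate $\beta \neq \alpha$, so the eigenspace is proper), and then upgrades $g^{*}\sigma_S = \pm\sigma_S$ to $g^{*}|_{T(S)} = \pm\id$ by the minimality of $T(S)$, whereas you exclude all $\rho \neq \pm\id$ at once. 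The core of your argument is sound and yields the lemma.

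One sub-step is misjustified as written: you assert that the eigenvalue $\lambda$ of $g^{*}$ on $\sigma_S$ is a root of unity because its Galois conjugates are eigenvalues of the integral matrix $\rho = g^{*}|_{T(S)}$ and hence have absolute value $1$. Integrality alone does not force the eigenvalues of an isometry of an indefinite lattice onto the unit circle (Salem factors of the N\'eron--Severi action of a positive-entropy automorphism are the standard counterexample); what makes it true here is that a Hodge isometry of $T(S)$ preserves the positive-definite real $2$-plane spanned by ${\rm Re}\,\sigma_S$, ${\rm Im}\,\sigma_S$ and its negative-definite orthogonal complement in $T(S)_{\R}$, hence lies in a compact group, so that together with integrality it generates a finite group; alternatively the paper simply cites \cite[Theorem 14.10]{Ue75}. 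The same caveat applies to your last paragraph, where you restrict to finite-order isometries $\rho$: the finiteness of the order needs this compactness (or Ueno), not integrality. The cleanest repair, which you in fact gesture at, is to drop the root-of-unity discussion altogether and remove the eigenvector loci for all $\rho \in {\rm O}(T)\setminus\{\pm\id\}$: there are still only countably many, each eigenspace is a proper subspace, and the conclusion follows. The aside $\varphi(m)\mid 20$ is likewise unnecessary, and as stated it itself relies on the minimality of $T(S)$, which you never invoke.
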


\begin{proof} By the minimality of $T(S)$, it suffices to show that $g^* \sigma_S = \pm \sigma_S$ if $S$ is very general. 

We have $g^*\sigma_S = \alpha \sigma_S$ for some $\alpha \in \C$. As $S$ is projective, $\alpha$ is a cyclotomic integer (\cite[Theorem14.10]{Ue75}). We have
$\sigma_S \in V(\alpha) \subset T(S) \otimes_{\Z} \C$.  
Here $V(\alpha)$ is the eigenspace of $g^*|T(S)$ of eigenvalue $\alpha$. 
The space $V(\alpha)$ is a proper linear subspace of $T(S) \otimes_{\Z} \C$ if $\alpha \not= \pm 1$, as $\alpha$ has then a Galois conjugate $\beta$ with $\beta \not= \alpha$. Thus, the set of periods of all such $S$ that $\alpha \not= \pm 1$ belongs to some countable union of hypersurfaces in the $18$-dimensional period domain of $L$-polarized K3 surfaces. This implies the result. 
\end{proof} 
Replacing $(h_1, h_2)$ by $(-h_1, -h_2)$ if necessary, we may and will assume that $h_1 \in P(S)$. By the shape of ${\rm NS}\, (S)$, one readily obtains the following:

\begin{lemma}\label{lem44}
\par
\noindent
\begin{enumerate}
\item Let $xh_1 + yh_2 \in {\rm NS}\, (S)_{\R}$. Then 
$$((xh_1 + yh_2)^2)_S = 4x^2 + 8\ell xy + 4y^2 
= 4((x+\ell y)^2 - (\ell^2 -1)y^2)\,\, .$$
\item 
${\rm NS}\, (S)$ represents neither $0$ nor $\pm 2$, i.e., $(d^2)_S \not= 0, \pm 2$ for $d \in {\rm NS}\, (S) \setminus \{0\}$. 
\item
$$T(S)^*/T(S) \simeq {\rm NS}\,(S)^*/{\rm NS}\, (S) = \langle \frac{h_1}{4}, \frac{h_2 - \ell h_1}{4(\ell^2 -1)} \rangle \simeq \Z_{4} \oplus \Z_{4(\ell^2 -1)}
\,\, .$$
\end{enumerate} 
\end{lemma}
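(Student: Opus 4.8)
The plan is to prove all three statements by direct linear algebra with the Gram matrix $G = \left(\begin{array}{rr} 4 & 4\ell \\ 4\ell & 4 \end{array}\right)$ of ${\rm NS}\,(S)$ in the basis $(h_1, h_2)$, exactly as the phrase ``by the shape of ${\rm NS}\,(S)$'' suggests.

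For (1) I would simply expand $((xh_1+yh_2)^2)_S = 4x^2 + 2(4\ell)xy + 4y^2$ and complete the square to get $4\bigl((x+\ell y)^2 - (\ell^2-1)y^2\bigr)$. For (2), take $d = xh_1 + yh_2$ with $(x,y) \in \Z^2 \setminus \{(0,0)\}$. By (1), $(d^2)_S \in 4\Z$, so $(d^2)_S \ne \pm 2$ immediately. If $(d^2)_S = 0$ then $(x+\ell y)^2 = (\ell^2-1)y^2$; here $y = 0$ forces $x = 0$, a contradiction, so $y \ne 0$ and $\ell^2 - 1$ would be a perfect square, which is impossible since $(\ell-1)^2 < \ell^2 - 1 < \ell^2$ for $\ell \ge 2$. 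Hence $(d^2)_S \ne 0$.

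For (3), the isomorphism $T(S)^*/T(S) \simeq {\rm NS}\,(S)^*/{\rm NS}\,(S)$ is the canonical one recalled in Section \ref{sect2}, coming from the unimodularity of $H^2(S,\Z)$. To identify the group ${\rm NS}\,(S)^*/{\rm NS}\,(S)$, first note that its order is $|\det G| = 16(\ell^2-1)$. Then check that $e_1 := h_1/4$ and $e_2 := (h_2 - \ell h_1)/(4(\ell^2-1))$ lie in ${\rm NS}\,(S)^*$ by pairing them against $h_1$ and $h_2$, and verify that they have orders $4$ and $4(\ell^2-1)$ respectively in ${\rm NS}\,(S)^*/{\rm NS}\,(S)$, using that $h_2 - \ell h_1$ is primitive in ${\rm NS}\,(S)$. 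Finally express the dual basis $(h_1^*, h_2^*)$ (the rows of $G^{-1} = \tfrac{1}{-4(\ell^2-1)}\left(\begin{array}{rr} 1 & -\ell \\ -\ell & 1 \end{array}\right)$) in terms of $e_1, e_2$ modulo ${\rm NS}\,(S)$: one finds $h_2^* \equiv -e_2$ and $h_1^* \equiv e_1 + \ell e_2$, so $e_1$ and $e_2$ generate ${\rm NS}\,(S)^*/{\rm NS}\,(S)$. Since the subgroup they generate has order at most $4 \cdot 4(\ell^2-1) = 16(\ell^2-1)$, which already equals $|{\rm NS}\,(S)^*/{\rm NS}\,(S)|$, the sum must be direct, giving ${\rm NS}\,(S)^*/{\rm NS}\,(S) = \langle e_1 \rangle \oplus \langle e_2 \rangle \simeq \Z_4 \oplus \Z_{4(\ell^2-1)}$.

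The only mildly delicate point is the bookkeeping at the end of (3): one has to check both that $e_1, e_2$ span (via the $G^{-1}$ reduction) and that their orders multiply to the full discriminant order, in order to upgrade ``generating set'' to ``direct sum decomposition''. Everything else is a routine computation.
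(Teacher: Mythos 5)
Your proposal is correct, and it simply fills in the routine Gram-matrix computation that the paper omits entirely (the lemma is stated as following ``by the shape of ${\rm NS}\,(S)$'' with no written proof): the completion of the square, the divisibility-by-$4$ and non-square $\ell^2-1$ argument for (2), and the verification that $h_1/4$ and $(h_2-\ell h_1)/(4(\ell^2-1))$ lie in ${\rm NS}\,(S)^*$, have orders $4$ and $4(\ell^2-1)$, and generate (indeed they equal $h_1^*+\ell$-combinations of the dual basis), with the order count forcing the direct sum, are all accurate. This is the same intended approach, just written out.
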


We set $v_1 := (-\ell + \sqrt{\ell^2 -1})h_1 + h_2$, $v_2 := h_1 + (-\ell + \sqrt{\ell^2 -1})h_2 \in {\rm NS}\,(S)_{\R}$.

\begin{lemma}\label{lem45} $\overline{{\rm Amp}}\, (S) = {\overline P}\, (S) = \R_{\ge 0}v_1 + \R_{\ge 0}v_2$. 
\end{lemma}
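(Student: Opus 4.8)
The plan is to show two inclusions, $\overline{\operatorname{Amp}}(S) \subseteq \overline{P}(S)$ and $\overline{P}(S) \subseteq \overline{\operatorname{Amp}}(S)$, where the first is already recorded in the text and the real content is the reverse inclusion, together with the explicit description of $\overline{P}(S)$ as the cone $\R_{\ge 0}v_1 + \R_{\ge 0}v_2$. First I would verify the explicit description of the closed positive cone: by Lemma~\ref{lem44}(1) the quadratic form on $\operatorname{NS}(S)_\R$ in coordinates $(x,y)$ with respect to the basis $(h_1,h_2)$ is $4((x+\ell y)^2 - (\ell^2-1)y^2)$, which is a hyperbolic form whose null cone is the pair of lines $x + \ell y = \pm\sqrt{\ell^2-1}\,y$. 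Solving, these null rays are spanned precisely by $v_1 = (-\ell+\sqrt{\ell^2-1})h_1 + h_2$ and $v_2 = h_1 + (-\ell+\sqrt{\ell^2-1})h_2$; one checks $(v_i^2)_S = 0$ and $(v_1,v_2)_S > 0$, so $v_1,v_2$ lie on the boundary of one and the same component of the positive cone. Since $h_1 \in P(S)$ by our normalization and $h_1$ lies in the open cone between $v_1$ and $v_2$ (as $h_1 = \tfrac{1}{\sqrt{\ell^2-1}-\ell+\ldots}(\text{positive combination})$ — a short direct computation with the two ray equations), we conclude $\overline{P}(S) = \R_{\ge 0}v_1 + \R_{\ge 0}v_2$.

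Next I would show $\overline{P}(S) \subseteq \overline{\operatorname{Amp}}(S)$. By the standard description of the ample cone of a K3 surface, $\operatorname{Amp}(S)$ is the set of $x \in P(S)$ with $(x, C)_S > 0$ for every $(-2)$-curve $C$, and its closure $\overline{\operatorname{Amp}}(S)$ is cut out inside $\overline{P}(S)$ by the inequalities $(x,\delta)_S \ge 0$ for $\delta$ ranging over effective $(-2)$-classes. But by Lemma~\ref{lem44}(2), $\operatorname{NS}(S)$ represents neither $0$ nor $\pm 2$; in particular there are no $(-2)$-classes at all, hence no $(-2)$-curves on $S$ and no walls inside $P(S)$. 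Therefore $\operatorname{Amp}(S) = P(S)$, and taking closures gives $\overline{\operatorname{Amp}}(S) = \overline{P}(S)$. Combined with the inclusion $\overline{\operatorname{Amp}}(S) \subseteq \overline{P}(S)$ noted in Section~\ref{sect2}, this proves all three sets coincide.

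I do not expect a genuine obstacle here: the argument is essentially a two-variable quadratic-form computation plus the (-2)-class vanishing from Lemma~\ref{lem44}(2). The one point requiring a little care is orientation — making sure $v_1$ and $v_2$ are boundary rays of the \emph{same} connected component of the positive cone, namely the one containing $h_1$ (equivalently the one containing the ample classes), rather than of the opposite component; this is handled by checking the sign of $(h_1, v_i)_S$, or equivalently by checking that writing $h_1$ as a real combination of $v_1$ and $v_2$ produces strictly positive coefficients. Once the component is correctly identified, the identification of $\overline{P}(S)$ with $\R_{\ge 0}v_1 + \R_{\ge 0}v_2$ is immediate, and the equality with $\overline{\operatorname{Amp}}(S)$ follows from the absence of $(-2)$-curves.
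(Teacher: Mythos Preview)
Your proposal is correct and follows essentially the same approach as the paper: the explicit description of $\overline{P}(S)$ comes from the quadratic form in Lemma~\ref{lem44}(1) together with the normalization $h_1 \in P(S)$, and the equality $\overline{{\rm Amp}}(S) = \overline{P}(S)$ comes from the absence of $(-2)$-classes via Lemma~\ref{lem44}(2). The paper's proof is just a terser version of exactly what you wrote.
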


\begin{proof}
As $h_1 \in P(S)$, the second equality follows from Lemma (\ref{lem44})(1). As ${\rm NS}\, (S)$ does not represent $-2$ by Lemma (\ref{lem44})(2), $S$ contains no $\P^1$. This implies the first equality. 
\end{proof}

\begin{lemma}\label{lem46}
${\rm Aut}\, (S)$ has no element of finite order other than $id_S$.
\end{lemma}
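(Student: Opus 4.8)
\textbf{Proof proposal for Lemma (\ref{lem46}).}

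The plan is to argue by contradiction, using the very general hypothesis on $S$ together with Lemma (\ref{lem41}) and a delicate analysis of how a torsion automorphism acts on $T(S)$. Suppose $g \in {\rm Aut}\,(S)$ has finite order $n > 1$. By Lemma (\ref{lem41}), $g^*|T(S) = \pm \id$. If $g^*|T(S) = \id$, then $g$ acts trivially on the transcendental lattice and also preserves the polarizations (since ${\rm NS}\,(S)$ is generated by the distinguished classes $h_1, h_2$ which are permuted only trivially — I will need to check that $g^*$ acts trivially, or at worst by a finite-order isometry, on ${\rm NS}\,(S)$), so by the Torelli theorem $g$ is the identity, a contradiction. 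Thus the only remaining case is $g^*|T(S) = -\id$, which forces $g^*\sigma_S = -\sigma_S$, so $g$ cannot be a symplectic automorphism; in particular $n$ is even, and replacing $g$ by a power we may assume $g$ is an anti-symplectic \emph{involution}.

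Next I would pin down the action of an anti-symplectic involution $g$ on ${\rm NS}\,(S)$. Since $g^2 = \id$, the isometry $g^*|{\rm NS}\,(S)$ is an involution of the rank-$2$ lattice $L$; since it must preserve the ample cone and hence the nef cone $\overline{{\rm Amp}}\,(S) = \R_{\ge 0}v_1 + \R_{\ge 0}v_2$ described in Lemma (\ref{lem45}), it either fixes both rays or swaps them. If it fixes both rays it is the identity on ${\rm NS}\,(S)_{\R}$, hence on ${\rm NS}\,(S)$; if it swaps them it is the involution interchanging $v_1 \leftrightarrow v_2$, which on the basis $(h_1,h_2)$ is the swap $h_1 \leftrightarrow h_2$ (this is visible from the symmetry of the Gram matrix and of the $v_i$). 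Now I invoke the compatibility of the natural isomorphism ${\rm NS}\,(S)^*/{\rm NS}\,(S) \simeq T(S)^*/T(S)$ with ${\rm Aut}\,(S)$ from \cite[Proposition 1.6.1]{Ni79}: $g^*$ acts as $-\id$ on the left-hand discriminant group (coming from $T(S)$), so it must act as $-\id$ on ${\rm NS}\,(S)^*/{\rm NS}\,(S) \simeq \Z_4 \oplus \Z_{4(\ell^2-1)}$ computed in Lemma (\ref{lem44})(3). But neither the identity nor the swap $h_1 \leftrightarrow h_2$ induces $-\id$ on this discriminant group: the identity induces $+\id$, and one checks using the generators $h_1/4$ and $(h_2 - \ell h_1)/(4(\ell^2-1))$ that the swap sends $h_1/4 \mapsto h_2/4$, which is not $-h_1/4$ modulo ${\rm NS}\,(S)$ (for instance because $\ell \ge 5$ makes the relevant congruences fail). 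This contradiction rules out $g^*|T(S) = -\id$ as well, so no such $g$ exists.

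The main obstacle I anticipate is the discriminant-group bookkeeping in the last step: one has to write the swap involution explicitly on the generators of ${\rm NS}\,(S)^*/{\rm NS}\,(S)$ and verify that the induced map on $\Z_4 \oplus \Z_{4(\ell^2-1)}$ is genuinely distinct from $-\id$, which requires being careful about the change of basis between $\{h_1/4,\ (h_2-\ell h_1)/(4(\ell^2-1))\}$ and the image of $\{h_1/4,\ h_2/4\}$. There is also a secondary subtlety in the case $g^*|T(S)=\id$: one must confirm that an automorphism acting trivially on $T(S)$ and by a finite-order isometry on ${\rm NS}\,(S)$ — combined with the fact that ${\rm NS}\,(S)$ admits no nontrivial cone-preserving isometry other than the swap — is actually the identity, which again comes down to excluding the swap via the same discriminant argument (the swap would have to act trivially on $T(S)^*/T(S)$, forcing it to act trivially on ${\rm NS}\,(S)^*/{\rm NS}\,(S)$, which it does not). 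So in fact both cases funnel into the single computation that the swap $h_1 \leftrightarrow h_2$ acts nontrivially, and in a way incompatible with any allowed transcendental action, on the discriminant group — that is the crux.
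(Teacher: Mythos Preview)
Your proposal is correct and follows essentially the same approach as the paper: both arguments reduce to checking that the only finite-order, cone-preserving isometries of ${\rm NS}(S)$ are the identity and the swap $h_1 \leftrightarrow h_2$, and then ruling each out via compatibility with $g^*|_{T(S)} = \pm\id$ on the discriminant group ${\rm NS}(S)^*/{\rm NS}(S) \simeq \Z_4 \oplus \Z_{4(\ell^2-1)}$ (the key fact being $(h_1 \pm h_2)/4 \notin {\rm NS}(S)$). The only organizational difference is that the paper splits first on the ${\rm NS}(S)$-action (fixing vs.\ swapping the boundary rays) and then reads off the $T(S)$-action, whereas you split first on $g^*|_{T(S)} = \pm\id$; this makes your reduction ``replace $g$ by a power to get an anti-symplectic involution'' slightly awkward---cleaner is to note that once Case~1 is done, $g^2$ falls into Case~1 and hence $g^2=\id_S$ automatically.
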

\begin{proof}
Let $g \in {\rm Aut}\, (S)$. Then, either $g^*v_1 = \alpha v_1$ and $g^*v_2 = \beta v_2$ (first case) or $g^*v_1 = \alpha v_2$ and $g^*v_2 = \beta v_1$ (second case), for some positive real numbers $\alpha$ and $\beta$. 

{\it Assume that $g$ is of finite order.} 

Then, in the first case, $\alpha = \beta = 1$, whence $g^* \vert {\rm NS}\, (S) = id$. Then $g^* \vert_{{\rm NS}\, (S)^*/{\rm NS}\, (S)} = id$. Hence $g^* \vert_{T(S)^*/T(S)} = id$ as well. On the other hand, $g^* \vert_{T(S)} = \pm id$, by Lemma (\ref{lem41}). It follows that $g^* \vert_{T(S)} = id$ by Lemma (\ref{lem44})(3), as $g^*|_{T(S)^*/T(S)} = id$. Hence $g = id_S$ by the global Torelli theorem for K3 surfaces. 

In the second case, $(g^2)^*v_1 = \alpha^2 v_1$ and $(g^2)^*v_2 = \beta^2 v_2$. 
Hence $g^2 = id_S$ as we have shown. Thus $\alpha = \beta = 1$. Therefore 
$g^*v_1 = v_2$ and $g^*v_2 = v_1$.  
This implies that
$$g^*h_1 = h_2\,\, ,\,\, g^*h_2 = h_1\,\, .$$
Then $g^*|_{{\rm NS}\,(S)^*/{\rm NS}\,(S)} \not= \pm id$ by Lemma (\ref{lem44})(3). Indeed, we have $(h_1 \pm h_2)/4 \not\in {\rm NS}\, (S)$, as $h_1$ and 
$h_2$ form $\Z$-basis of ${\rm NS}\, (S)$. On the other hand, $g^*|_{T(S)^*/T(S)} = \pm id$, as $g^*|_{T(S)} = \pm id$ by our choice of $S$, a contradiction. This proves the assertion. 
\end{proof}
We set ${\mathcal H} := \{h \in \overline{{\rm Amp}}\, (S) \cap 
{\rm NS}\, (S)\, 
\vert\, (h^2)_S = 4\}$. Then, $h_1, h_2 \in {\mathcal H}$ by Lemma (\ref{lem45}) and the action of ${\rm Aut}\,(S)$ on ${\rm NS}\, (S)_{\R}$ preserves ${\mathcal H}$. 
\begin{lemma}\label{lem47}
\par
\noindent
\begin{enumerate}
\item
Any $h \in {\mathcal H}$ is very ample. 
\item There is no $g \in {\rm Aut}\, (S)$ such that $g^*h_1 = h_2$. 
\end{enumerate}
\end{lemma}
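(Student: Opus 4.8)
The plan is to prove both parts by combining Saint-Donat's theory of projective models of K3 surfaces with the non-representability properties of $\mathrm{NS}(S)$ recorded in Lemma (\ref{lem44}). For part (1), let $h \in \mathcal{H}$, so $h$ is nef, $(h^2)_S = 4$, and $h \in \overline{P}(S)$. First I would check that $h$ is base-point free: by Saint-Donat, a nef class $h$ on a K3 surface with $(h^2)_S > 0$ fails to be base-point free only if there is a divisor class $e$ with $(e^2)_S = 0$ and $(e, h)_S = 1$, or an effective $(-2)$-curve in the relevant configuration. Both are excluded here, since by Lemma (\ref{lem44})(2) the lattice $\mathrm{NS}(S)$ represents neither $0$ nor $-2$, so no such $e$ and no $(-2)$-curve exist at all. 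Hence $|h|$ is base-point free and defines a morphism $\varphi_h \colon S \to \P^3$. Next I would rule out the hyperelliptic (degree-$2$, image a quadric) case: again by Saint-Donat this case forces either a genus-$1$ pencil $e$ with $(e,h)_S = 2$ (so $(e^2)_S = 0$, impossible) or $h = 2b$ with $(b^2)_S = 1$ (impossible since $\mathrm{NS}(S)$ is even, or since it does not represent $2$). Therefore $\varphi_h$ is birational onto a quartic surface; since $S$ contains no $(-2)$-curves, $\varphi_h$ contracts nothing and is an isomorphism onto a smooth quartic, i.e.\ $h$ is very ample.

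For part (2), suppose for contradiction that $g \in \mathrm{Aut}(S)$ satisfies $g^*h_1 = h_2$. The strategy is exactly the "second case" computation already carried out in the proof of Lemma (\ref{lem46}): one shows $g^*$ must then also send $h_2 \mapsto h_1$ (using Lemma (\ref{lem45}): $g^*$ permutes the two extremal rays $\R_{\ge 0}v_1$, $\R_{\ge 0}v_2$ of $\overline{\mathrm{Amp}}(S)$, and sending $h_1$ off its own ray forces the swap, after which length considerations give $g^*v_1 = v_2$, $g^*v_2 = v_1$ exactly), so $g^*|_{\mathrm{NS}(S)}$ is the involution exchanging $h_1$ and $h_2$. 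Then on the discriminant group one computes, via Lemma (\ref{lem44})(3), that $g^*|_{\mathrm{NS}(S)^*/\mathrm{NS}(S)} \ne \pm \mathrm{id}$ — concretely $h_1/4 \mapsto h_2/4$ and $(h_1 \pm h_2)/4 \notin \mathrm{NS}(S)$ since $h_1, h_2$ form a $\Z$-basis — whereas $g^*|_{T(S)} = \pm\mathrm{id}$ by Lemma (\ref{lem41}) forces $g^*|_{T(S)^*/T(S)} = \pm\mathrm{id}$, and these are incompatible under the glue isomorphism $\mathrm{NS}(S)^*/\mathrm{NS}(S) \simeq T(S)^*/T(S)$ compatible with $\mathrm{Aut}(S)$. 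This contradiction proves (2).

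I expect the main obstacle to be part (1): carefully invoking the correct form of Saint-Donat's criterion and verifying that \emph{every} degenerate case (base points, hyperelliptic maps, contractions onto a singular image) is obstructed by the specific lattice-theoretic hypotheses of Lemma (\ref{lem44})(2). The bookkeeping of which auxiliary classes $e$ or $b$ need to be excluded, and matching each to "does not represent $0$", "does not represent $-2$", or "is even / does not represent $2$", is the delicate part; once that is in place the morphism $\varphi_h$ is forced to be a closed embedding with quartic image. Part (2) is essentially a transcription of the argument already in Lemma (\ref{lem46}) and should present no real difficulty. One may also note, as a sanity check consistent with Remark (\ref{veryspecial})(2), that (2) is exactly what prevents the two polarizations $h_1, h_2$ from being related by an automorphism of $S$, which is the crux of the non-projective-equivalence in the main theorems.
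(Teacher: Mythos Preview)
Your argument for part (1) is correct and matches the paper's: invoke Saint-Donat's criteria and use Lemma~(\ref{lem44})(2) to kill every obstruction class. The paper is terser but the content is the same.

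Your argument for part (2), however, has a genuine gap. You claim that $g^*h_1 = h_2$ forces $g^*h_2 = h_1$ by an extremal-ray argument borrowed from the ``second case'' of Lemma~(\ref{lem46}). But in that lemma the hypothesis was that $g$ has \emph{finite order}, which is what pinned down the scaling factors $\alpha=\beta=1$ on the rays; here there is no such hypothesis. In fact there is a second lattice isometry of ${\rm NS}(S)$ with $g^*h_1=h_2$, namely $g^*h_2=-h_1+2\ell h_2$ (determinant $+1$ rather than $-1$), and a direct check shows this isometry \emph{fixes} each extremal ray $\R_{\ge 0}v_i$ with positive scaling. So ``sending $h_1$ off its own ray forces the swap'' is simply false: $h_1$ lies in the interior of the cone, and nothing prevents $g^*$ from fixing both boundary rays while moving $h_1$ to $h_2$.

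The paper handles this by writing $g^*h_2=ah_1+bh_2$, using $\det(g^*|_{{\rm NS}(S)})=\pm1$ and $(g^*h_1,g^*h_2)_S=4\ell$ to reduce to $(a,b)\in\{(1,0),(-1,2\ell)\}$. The case $(1,0)$ gives the swap you treat (and is then killed by combining Lemma~(\ref{lem43}) and Lemma~(\ref{lem46}), since $g^2|_{{\rm NS}(S)}=\mathrm{id}$ makes $g$ finite order, hence $g=\mathrm{id}_S$, contradicting $h_1\ne h_2$). The case $(-1,2\ell)$ needs a separate discriminant-group computation: one looks at $(g^*)^2$ acting on ${\rm NS}(S)^*/{\rm NS}(S)$, which must be $\mathrm{id}$ since $(g^*)^2|_{T(S)}=\mathrm{id}$ by Lemma~(\ref{lem41}), and derives $\tfrac{-h_1+\ell h_2}{2}\in{\rm NS}(S)$, contradicting that $h_1,h_2$ form a $\Z$-basis. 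Your write-up omits this second case entirely.
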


\begin{proof}
As $S$ contains no $\P^1$, the complete linear system $\vert h \vert$ is free (\cite[2.7]{SD74}). Also, there is no $d \in {\rm NS}\, (S) \setminus \{0\}$ 
with 
$(d^2)_S \in \{0, \pm 2 \}$ by Lemma (\ref{lem44})(2). The assertion (1) then follows from \cite[Theorem 5.2]{SD74}. 

Let us show the assertion (2). {\it Assume to the contrary that} there is $g \in {\rm Aut}\, (S)$ such that $g^*h_1 = h_2$. Write $g^*h_2 = ah_1 + bh_2$. Then
$$g^*(h_1, h_2) = (h_1, h_2)\left(\begin{array}{rr}
0 & a\\
1 & b
\end{array} \right)\,\, .$$
As ${\rm det}\, g^*|_{{\rm NS}\, (S)} = \pm 1$, it follows that $a = \pm 1$. 
We have $b = -\ell(a-1)$ by
$$4\ell = (h_1,h_2)_S = (g^*h_1,g^*h_2)_S = (h_2, ah_1+bh_2)_S = 4a\ell + 4b\,\, .$$
Hence $(a, b)$ is either $(1, 0)$ or $(-1, 2\ell)$. 

Assume that $(a, b) = (1, 0)$. Then $(g^*)^2h_1 = h_1$ and $(g^*)^2h_2 = h_2$. Thus $g^2 \vert_{{\rm NS}\, (S)} = id$. Hence $g$ is of finite order by Lemma (\ref{lem43}). However, then $g = id_S$ by Lemma (\ref{lem46}), a contradiction to $h_1 \not= h_2$ in ${\rm NS}\, (S)$. 

Assume that $(a, b) = (-1, 2\ell)$. Then, by $g^*h_1 = h_2$ and $g^*h_2 = -h_1 + 2\ell h_2$, we have 
$$(g^*)^2\frac{h_1}{4} = g^*\frac{h_2}{4} = \frac{-h_1 + 2\ell h_2}{4}\,\, .$$
By Lemma (\ref{lem41}), we have $(g^{*})^2 = id$ on $T(S)^*/T(S) \simeq {\rm NS}\, (S)^*/{\rm NS}\, (S)$. Hence
$$\frac{-h_1 + \ell h_2}{2} = (g^{*})^2\frac{h_2}{4} - \frac{h_1}{4} \in {\rm NS}\, (S)\,\, ,$$
a contradiction to the fact that $h_1$ and $h_2$ form $\Z$-basis of ${\rm NS}\, (S)$. This proves (2).
\end{proof}
Let $h \in {\mathcal H}$ and $\Phi_{\vert h \vert} : S \to \P^3$ be the embedding defined by the complete linear system $\vert h \vert$ (Lemma (\ref{lem47})(1)). We set $S_h := \Phi_{\vert h \vert}(S) \subset \P^3$. Then $S_h$ is a smooth quartic K3 surface isomorphic to $S$, as $h$ is very ample and $(h^2)_S = 4$. 
\begin{lemma}\label{lem48}
Let $H$ be the hyperplane class of $\P^3$ and $C$ be any effective curve on $S_h$ such that $(C, H)_{\P^3} < 16$. Then, the classes $C$ and $H|_S$ are linearly dependent in ${\rm NS}\, (S_h)$. 
\end{lemma}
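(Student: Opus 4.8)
The plan is to argue by contradiction, reducing the statement to an elementary lattice estimate that exploits the largeness of $|\det L| = 16(\ell^2 - 1)$ for $\ell \ge 5$. First I would transport everything to $S$ via the isomorphism $\Phi_{\vert h\vert} : S \to S_h$ of Lemma (\ref{lem47})(1): it identifies ${\rm NS}\,(S_h)$ with ${\rm NS}\,(S) = L$ and carries $h$ to the hyperplane class $H\vert_{S_h}$, so that $(C, H)_{\P^3} = (C, h)_S$ for every curve $C \subset S_h$. We may assume $C \ne 0$ in ${\rm NS}\,(S_h)$, since otherwise there is nothing to prove; then, $h$ being very ample by Lemma (\ref{lem47})(1) and $C$ effective, $0 < (C, h)_S < 16$. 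Suppose now, for contradiction, that $C$ and $h$ are linearly independent in $L$. Then $M := \Z C + \Z h$ is a rank-$2$ sublattice of finite index $m := [L : M]$ in $L$.

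I then need two inequalities. First, $(C, h)_S \in \{4, 8, 12\}$: indeed $0 < (C, h)_S < 16$, and every value $(x, y)_S$ with $x, y \in L$ is divisible by $4$ because the Gram matrix of the $\Z$-basis $h_1, h_2$ has all entries divisible by $4$. Second, $(C^2)_S \ge 4$: by Lemma (\ref{lem44})(2), ${\rm NS}\,(S)$ represents neither $0$ nor $\pm 2$, so $S$ contains no smooth rational curve and no irreducible curve of self-intersection $0$; hence every irreducible component of $C$ has self-intersection $\ge 4$ (even, positive, not $2$), distinct components meet non-negatively, and so the nonzero effective class $C$ satisfies $(C^2)_S > 0$, i.e. $(C^2)_S \ge 4$. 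Now $|\det L| = 16(\ell^2 - 1)$ by Lemma (\ref{lem44})(3), and from the index formula $\det M = m^2 \det L$ together with the signature $(1,1)$ of $L$ we get
$$(C, h)_S^2 - 4(C^2)_S \;=\; -\det M \;=\; m^2\,|\det L| \;=\; 16(\ell^2 - 1)\,m^2 \;\ge\; 16 \cdot 24 \;=\; 384\,,$$
using $\ell \ge 5$. But the left-hand side is at most $12^2 - 4\cdot 4 = 128 < 384$, a contradiction. Hence $C$ and $h$ are linearly dependent in ${\rm NS}\,(S_h)$.

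The computation is entirely routine, and the only step deserving a word of care is the bound $(C^2)_S \ge 4$ for a \emph{possibly reducible or non-reduced} effective $C$; this is where the absence of $(-2)$-curves and of square-$0$ classes on the very general $S$ (Lemma (\ref{lem44})(2)) is essential. Conceptually, the mechanism is that the discriminant of our $L$ grows like $\ell^2$, so $\ell \ge 5$ (in fact $\ell \ge 4$ would do) forces any rank-$2$ sublattice of $L$ to have discriminant too large to be compatible with the small Gram entries imposed by $(C, h)_S < 16$ and effectivity of $C$.
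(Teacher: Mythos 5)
Your proof is correct and takes essentially the same route as the paper: assuming $C$ and $h$ linearly independent, one forms the rank-$2$ sublattice $\langle C, h\rangle \subset {\rm NS}\,(S)$, bounds its discriminant using effectivity of $C$ and $(C,H)_{\P^3} < 16$, and contradicts the index formula because $|\det {\rm NS}\,(S)| = 16(\ell^2-1)$ is too large for $\ell \ge 5$. Your refinements (divisibility of all pairings by $4$, the bound $(C^2)_S \ge 4$, and the explicit treatment of possibly reducible $C$) only sharpen the paper's cruder estimate $0 < |N| \le (C,H)_{\P^3}^2 < 16^2 < 16\ell^2 - 16$ and are not needed, though they do handle the general effective case stated in the lemma slightly more carefully.
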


\begin{proof} Identify $S_h$ with $S$ by $\Phi_{\vert h \vert}$. {\it Assuming to the contrary that there would be an effective curve $C \subset S_h$ such that the class $c := [C]$ is linearly independent to 
$h = H \vert_{S}$ in ${\rm NS}\, (S)$, we shall derive a contradiction.} Then $N := \langle c, h \rangle$ is a sublattice of ${\rm NS}\, (S)$ of the same rank $2$. In particular, the signature is $(1,1)$.
Then  
$$\vert N \vert := \vert {\rm det}\left(\begin{array}{rr}
(c^2)_S & (c,h)_S\\
(c,h)_S & (h^2)_S
\end{array} \right) \vert = (c,h)_S^2 - (c^2)_S \cdot (h^2)_S > 0$$
and $\vert N \vert$ is divisible by $\vert {\rm NS}(S) \vert := \vert {\rm det}(h_i.h_j) \vert$. 
Since $c = [C]$ is an effective class and $C \not\simeq \P^1$ 
by Lemma (\ref{lem44})(2), we have $(c^2)_S \ge 0$. Hence 
$$\vert N \vert \le (c,h)_S^2 = (C,H)_{\P^3}^2 < 16^2\,\, .$$
On the other hand, by $\ell \ge 5$, 
$$\vert {\rm NS}(S) \vert = 16\ell^2 - 16 > 
16 \cdot 17 - 16 = 16^2\,\, .$$
However, then $\vert N \vert$ is not divisible by 
$\vert {\rm NS}(S) \vert$, 
a contradiction. 
\end{proof}  
Let $S_i := \Phi_{\vert h_i \vert}(S) \subset \P^3$ ($i=1$, $2$). Then $S_1$ and $S_2$ are smooth quartic K3 surfaces in $\P^3$ such that $S_1 \simeq S_2 \simeq S$. The next lemma completes the proof of Theorem (\ref{main}):

\begin{lemma}\label{lem49}
There is no $\varphi \in {\rm Bir}\, (\P^3)$ such that $\varphi_*S_2 = S_1$. 
\end{lemma}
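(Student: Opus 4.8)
The plan is to argue by contradiction using Proposition (\ref{takahashi}) together with Lemma (\ref{lem48}). Suppose there were $\varphi \in {\rm Bir}\,(\P^3)$ with $\varphi_*S_2 = S_1$. I first want to reduce to the case $\varphi \notin {\rm Aut}\,(\P^3)$. Indeed, if $\varphi \in {\rm Aut}\,(\P^3) = {\rm PGL}\,(4,\C)$, then $\varphi$ induces an isomorphism $S_2 \to S_1$ which, being the restriction of a linear automorphism of $\P^3$, pulls back the hyperplane class $H|_{S_1}$ to $H|_{S_2}$; transporting everything back to $S$ via the fixed identifications $\Phi_{|h_i|}$, this produces $g \in {\rm Aut}\,(S)$ with $g^*h_1 = h_2$, which is excluded by Lemma (\ref{lem47})(2). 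So we may assume $\varphi \notin {\rm Aut}\,(\P^3)$.

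Now apply Proposition (\ref{takahashi}) with $S = S_2$, $S' = S_1$: there is an irreducible reduced curve $C \subset S_2$ with ${\rm deg}\,C = (C,H)_{\P^3} < 16$ whose class is linearly independent from $H|_{S_2}$ in ${\rm NS}\,(S_2)$. But $S_2 = \Phi_{|h_2|}(S)$ with $h_2 \in \mathcal H$, so Lemma (\ref{lem48}) applies to $S_2$ in place of $S_h$ (with $h = h_2$) and says exactly that any effective curve $C$ on $S_2$ with $(C,H)_{\P^3} < 16$ must have class proportional to $H|_{S_2}$ in ${\rm NS}\,(S_2)$. This directly contradicts the conclusion of Proposition (\ref{takahashi}), so no such $\varphi$ exists.

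The only genuinely delicate point is the reduction step handling $\varphi \in {\rm Aut}\,(\P^3)$: one must be careful that Proposition (\ref{takahashi}) is stated only for $\varphi \notin {\rm Aut}\,(\P^3)$, so the linear case has to be disposed of separately, and the mechanism for that is precisely Lemma (\ref{lem47})(2) (together with the global Torelli-type bookkeeping already packaged into Lemmas (\ref{lem46})--(\ref{lem47})). Everything else is a bookkeeping matter of translating the $\P^3$-statement back along the embeddings $\Phi_{|h_i|}$ to statements about ${\rm NS}\,(S)$, where the numerics of Lemma (\ref{lem44}) and the choice $\ell \ge 5$ have already been arranged to make Lemma (\ref{lem48}) bite. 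I expect the write-up to be short: state the contradiction hypothesis, split into the two cases according to whether $\varphi$ is linear, kill the linear case by Lemma (\ref{lem47})(2), and kill the non-linear case by combining Proposition (\ref{takahashi}) with Lemma (\ref{lem48}).
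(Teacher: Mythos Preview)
Your proof is correct and matches the paper's own argument: both combine Proposition~(\ref{takahashi}) with Lemma~(\ref{lem48}) to force $\varphi \in {\rm Aut}\,(\P^3)$, and then invoke Lemma~(\ref{lem47})(2) to rule out the linear case. The only difference is cosmetic ordering---the paper first concludes $\varphi$ must be linear and then derives the contradiction, whereas you split into cases up front---but the content is identical.
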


\begin{proof} {\it Assuming to the contrary that there would be 
$\varphi \in {\rm Bir}\, (\P^3)$ such that $\varphi_*(S_2) = S_1$, we shall derive a contradiction.} 

By Proposition (\ref{takahashi}) and Lemma (\ref{lem48}), $\varphi \in {\rm Aut}\, (\P^3)$. Hence
$$g :=  (\Phi_{\vert h_1 \vert}|_{S_1})^{-1} \circ (\varphi |_{S_2}) \circ \Phi_{\vert h_2 \vert} \in {\rm Aut}\, (S)\,\, .$$
We also note that $h_1 = \Phi_{\vert h_1 \vert}^*H$, $h_2 = \Phi_{\vert h_2 \vert}^*H$ for the hyperplane class $H$ of $\P^3$. We have $\varphi^*H = H$, as $\varphi \in {\rm Aut}\, (\P^3) = {\rm PGL}(4, \C)$. However, then $g^*h_1 = h_2$, a contradiction to Lemma (\ref{lem47})(2). 
\end{proof}

\section{Proof of Theorem (\ref{main3})(1).}\label{sect4}

In this section we shall prove Theorem (\ref{main3})(1).
We freely use the notation introduced in Section \ref{sect2} and Theorem (\ref{main3}). 
Let 
$$P = \P^3 \times \P^3 = \P_1^3 \times \P_2^3\,\, ,$$
and $p_i : P \to \P_i^3$ be the $i$-th projection ($i=1$, $2$). 
We denote the homogeneous coordinates of $P$ by
$$({\mathbf x}, {\mathbf y}) = ([x_1 :  x_2 : x_3 : x_4], [y_1 :  y_2 : y_3 : y_4])\,\, .$$
We denote by ${\mathbf x}^t$ (resp. ${\mathbf y}^t$) the transpose of ${\mathbf x}$ (resp. ${\mathbf y}$). 

Let $H_1$ and $H_2$ be divisors on $P = \P^3 \times \P^3$ of bidegree $(1, 0)$ and $(0, 1)$ respectively. Then 
$$\sO_{P}(H_1) = \sO_{P}(1, 0) = p_1^*\sO_{\P_1^3}(1)\,\, ,\,\, \sO_{P}(H_2) = \sO_{P}(0, 1) = p_2^*\sO_{\P_2^3}(1)\,\, .$$

{\it First, we show that (b) implies (a).} 

$S$ is a K3 surface by the adjunction formula. Let $\ell_i$ be the hyperplane class of $S_i \subset \P^3$ and $h_i = p_i^*(\ell_i)$. Then $h_1$ and $h_2$ satisfy the requirement in (a).

{\it Next we show that (a) implies (b).}

Consider the embedding
$$\Phi := \Phi_{|h_1|} \times \Phi_{|h_2|} : S \to P = \P^3 \times \P^3$$
given by the very ample complete linear systems $|h_i|$.

 We denote $\tilde{S} := \Phi(S)$. Then we have $h_i = H_i |_{\tilde{S}}$ ($i=1$, $2$) and 
$$S_i := p_i(\tilde{S}) = \Phi_{|h_i|}(S) \simeq S\,\, .$$
{\it In what follows, whenever we write $S = \tilde{S}$ (resp. $S = S_i$ ($i=1$, $2$)), we understand that the identification is made by $\Phi$ (resp. $\Phi_{|h_i|}$).} 

We note that 
$$\C^4 \simeq \C \langle x_1, x_2, x_3, x_4 \rangle = H^0(\P_1^3, \sO_{\P_1^3}(1)) \simeq H^0(S, \sO_S(h_1))$$ under the restriction map and the identification of $S$ with $S_1$ by $\Phi_{|h_1|}$. Similarly
$$\C^4 \simeq \C \langle y_1, y_2, y_3, y_4 \rangle = H^0(\P_2^3, \sO_{\P_2^3}(1)) \simeq H^0(S, \sO_S(h_2))\,\, .$$
Now we identify $S$ with $S_1$ by $\Phi_{|h_1|}$. 
As $h_1$ and $h_2$ are very ample with $(h_i^2)_S = 4$ and $(h_1, h_2)_S = 6$, we have an exact sequence of sheaves on $\P_1^3$:
$$0 \to \sO_{\P_1^3}(-1) \otimes_{\C} H^0(\P_2^3, \sO_{\P_2^3}(1)) \stackrel{M({\mathbf x}) \cdot}{\to} \sO_{\P_1^3} \otimes_{\C} H^0(\P_2^3, \sO_{\P_2^3}(1)) \stackrel{m}{\to} \sO_S(h_2) \to 0\,\, ,$$
by Beauville \cite[Corollary 6.6, Proposition 1.11]{Be00}. Then, by tensoring $\sO_{\P_1^3}(1)$, we obtain an exact sequence:
$$0 \to \sO_{\P_1^3} \otimes_{\C} H^0(\P_2^3, \sO_{\P_2^3}(1)) \stackrel{M({\mathbf x}) \cdot}{\to} \sO_{\P_1^3}(1) \otimes_{\C} H^0(\P_2^3, \sO_{\P_2^3}(1)) \stackrel{m}{\to} \sO_S(h_1+h_2) \to 0\,\, .$$
In both exact sequences, the first map $M({\mathbf x}) \cdot$ is the multiplication map 
$${\mathbf y}^t \mapsto M({\mathbf x}) \cdot {\mathbf y}^t$$
by a $4 \times 4$ matrix $M({\mathbf x}) = (m_{ij}({\mathbf x}))$ whose $(i,j)$-entry 
$$m_{ij}({\mathbf x}) = a_{ij1}x_1 + a_{ij2}x_2 + a_{ij3}x_3 + a_{ij4}x_4$$ 
is a linear homogeneous form of ${\mathbf x}$. The second map $m$ is the natural multiplication map through the isomorphism $H^0(\P_2^3, \sO_{\P_2^3}(1)) \simeq H^0(S, \sO_{S}(h_2))$ induced by the restriction map and $\Phi_{|h_2|}^*$. 
Taking the cohomology exact sequence of the second exact sequence above, we obtain the exact sequence
$$0 \to \C \otimes_{\C} H^0(\P_2^3, \sO_{\P_2^3}(1)) \stackrel{M({\mathbf x}) \cdot}{\to} H^0(P, \sO_P(1,1)) \stackrel{m}{\to} H^0(S, \sO_S(h_1+h_2)) \to 0\,\, .$$
Here we used $H^1(\P^3, \sO_{\P^3}) = 0$, and the Kunneth isomorphism for the middle factor above:
$$H^0(\P_1^3, \sO_{\P_1^3}(1)) \otimes_{\C} H^0(\P_2^3, \sO_{\P_2^3}(1)) \simeq  H^0(P, \sO_P(1,1))\,\, .$$
Thus 
$$\tilde{S} \subset (M({\mathbf x})\cdot {\mathbf y}^t = {\mathbf 0}^t) 
= Q_1 \cap Q_2 \cap Q_3 \cap Q_4 \subset P\,\, .$$
Here $Q_k$ is a hypersurface of bidegree $(1,1)$ defined by 
$$Q_k = ({\mathbf x}\cdot (a_{ijk})_{i,j} \cdot {\mathbf y}^t = 0) \subset P\,\, .$$   
Note that  
$$\sum_{j=1}^{4}(\sum_{k=1}^{4}a_{ijk}x_k)y_j = \sum_{k=1}^{4}(\sum_{j=1}^{4}a_{ijk}y_j)x_k\,\, .$$
Then we have an identity
$$M({\mathbf x})\cdot {\mathbf y}^t = N({\mathbf y})\cdot {\mathbf x}^t\,\, .$$
Here $N({\mathbf y}) =(n_{ik}({\mathbf y}))_{i, k}$ is the $4 \times 4$ matrix whose $(i, k)$-entry is
$$n_{ik}({\mathbf y}) = a_{i1k}y_1 + a_{i2k}y_2 + a_{i3k}y_3 + a_{i4k}y_4\,\, .$$ 
The next proposition completes the proof of the fact that (a) implies (b) and the last statement of Theorem (\ref{main3})(1). Our trivial identity above plays an important role in the proof and also shows a way to obtain explicit equations of $S_i$ ($i=1$, $2$) from $\tilde{S}$. 
\begin{proposition}\label{lem32}
As closed subschemes, we have:
\begin{enumerate}
\item $S_1 = (\det M({\mathbf x}) = 0)$ in $\P_1^3$. 
\item $\tilde{S} = Q_1 \cap Q_2 \cap Q_3 \cap Q_4$ in $P$. 
\item $S_2 = (\det N({\mathbf y}) = 0)$ in $\P_2^3$.
\end{enumerate} 
\end{proposition}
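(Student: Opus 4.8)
The plan is, for (1) and (3), to identify the determinantal quartics with the integral surfaces $S_1$ and $S_2$, and for (2) to promote the inclusion $\tilde S\subseteq W:=Q_1\cap Q_2\cap Q_3\cap Q_4$ in $P$ already established above to an equality of \emph{schemes}; the decisive input throughout is that Beauville's cokernel $\sO_S(h_2)$ is not merely supported on $S_1$ but is a line bundle there, which pins down the rank of $M(\mathbf x)$ at every point of $S_1$. For part (1): in the exact sequence $0\to\sO_{\P_1^3}(-1)^{\oplus 4}\xrightarrow{M(\mathbf x)\cdot}\sO_{\P_1^3}^{\oplus 4}\to\sO_S(h_2)\to 0$ the cokernel $\sF:=\sO_S(h_2)$ is the pushforward to $\P_1^3$ of a line bundle on $S_1\simeq S$, so $\det M(\mathbf x)\not\equiv 0$ (otherwise $M(\mathbf x)$ would be singular for all $\mathbf x$ and $\sF$ supported on all of $\P_1^3$); hence $(\det M(\mathbf x)=0)$ is a quartic equal set-theoretically to $\{\mathbf x:M(\mathbf x)\text{ singular}\}=\mathrm{Supp}\,\sF=S_1$. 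Since $S_1$ is an integral quartic surface, $\sI_{S_1}$ is generated by one degree-$4$ form, and $\det M(\mathbf x)$, being a degree-$4$ form vanishing on $S_1$, is a nonzero scalar multiple of it; thus $(\det M(\mathbf x))=\sI_{S_1}$, which is (1). (Equivalently, $(\det M(\mathbf x))$ is the zeroth Fitting ideal of $\sF$, which is locally $\sI_{S_1}$ since $\sF$ is locally cyclic.)

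Part (2) is the heart of the matter. As $p_1(W)\subseteq(\det M(\mathbf x)=0)=S_1$, the projection restricts to $q:=p_1|_W\colon W\to S_1$, which is proper with finite fibres, hence finite. The key computation: for each $\mathbf x\in S_1$, tensoring the displayed exact sequence with the residue field $k(\mathbf x)$ gives $\mathrm{coker}\,M(\mathbf x)=\sF\otimes k(\mathbf x)$, which is one-dimensional because $\sF$ is a line bundle along $S_1$; hence $\rank M(\mathbf x)=3$ for \emph{every} $\mathbf x\in S_1$, so the scheme-theoretic fibre $q^{-1}(\mathbf x)=(M(\mathbf x)\cdot\mathbf y^t=\mathbf 0^t)\subset\P^3_{\mathbf y}$ is cut out by a $3$-dimensional space of linear forms, i.e.\ is the reduced point $\Spec k(\mathbf x)$. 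A finite surjection onto the integral variety $S_1$ all of whose scheme-theoretic fibres are reduced points is an isomorphism: $q_\ast\sO_W$ is coherent with $q_\ast\sO_W\otimes k(\mathbf x)=k(\mathbf x)$ for all $\mathbf x$, so by Nakayama the structure map $\sO_{S_1}\to q_\ast\sO_W$ is surjective, while it is injective because $q$ is surjective and $S_1$ is reduced; since $q$ is affine, $W\xrightarrow{\ \sim\ }S_1$. Finally, $\tilde S$ is a closed subscheme of $W$ and the composite $\tilde S\hookrightarrow W\xrightarrow{q}S_1$ equals $\Phi_{|h_1|}$, an isomorphism onto $S_1$; hence the closed immersion $\tilde S\hookrightarrow W$ is itself an isomorphism, i.e.\ $\tilde S=W$ in $P$, which is (2).

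Part (3) follows from (2) together with the identity $M(\mathbf x)\cdot\mathbf y^t=N(\mathbf y)\cdot\mathbf x^t$: this gives $W=(N(\mathbf y)\cdot\mathbf x^t=\mathbf 0^t)$, so by (2) $\tilde S=(N(\mathbf y)\cdot\mathbf x^t=\mathbf 0^t)$, whence set-theoretically $(\det N(\mathbf y)=0)=\{\mathbf y:N(\mathbf y)\text{ singular}\}=p_2(\tilde S)=S_2$. In particular $\det N(\mathbf y)\not\equiv 0$ (else $p_2(\tilde S)$ would be all of $\P_2^3$), so $\det N(\mathbf y)$ is a degree-$4$ form vanishing on the integral quartic $S_2$ and therefore a nonzero scalar multiple of its defining equation; thus $(\det N(\mathbf y))=\sI_{S_2}$. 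The one genuinely delicate point is the passage from set-theoretic to scheme-theoretic equality in (2), and it rests entirely on $\mathrm{coker}\,M(\mathbf x)=\sO_S(h_2)$ being a line bundle on $S_1$ rather than merely a coherent sheaf supported there: this is what forces $\rank M(\mathbf x)=3$ at \emph{every} point of $S_1$, hence the reducedness of the fibres of $W\to S_1$, and precludes $W$ from acquiring nilpotents or spurious lower-dimensional components.
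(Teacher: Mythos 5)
Your proposal is correct, and for the crucial part (2) it takes a genuinely different route from the paper. Parts (1) and (3) are essentially the paper's arguments (the paper gets $\det M \not\equiv 0$ from torsionness of the cokernel and gets $S_1 \subset (\det M(\mathbf x)=0)$ by lifting points of $S_1$ to $\tilde S$, while you phrase both via ${\rm Supp}\,\sF$; the degree-$4$-plus-integrality conclusion is the same). For (2), the paper first shows only that the set-theoretic fibers of $p_1|_W$ over $S_1$ are single points, deduces $\tilde S = W_{\rm red}$ and that $W$ is a complete intersection of pure dimension $2$, and then kills the ideal sheaf $\sI$ in $0 \to \sI \to \sO_W \to \sO_{\tilde S}\to 0$ by comparing $h^0$ of twists: $h^0(\sO_{\tilde S}(n,n)) = 10n^2+2$ by Riemann--Roch on the K3, $h^0(\sO_W(n,n)) = \chi(\sO_W(n,n)) = 10n^2+2$ from the Koszul resolution, plus Serre vanishing. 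You instead upgrade the pointwise statement to the scheme level directly: tensoring Beauville's resolution with $k(\mathbf x)$ shows ${\rm coker}\,M(\mathbf x)$ is one-dimensional, hence $\rank M(\mathbf x)=3$ at \emph{every} point of $S_1$ (the paper obtains this rank statement too, but from smoothness of $S_1$ via the Jacobian criterion, and uses it only set-theoretically), so every scheme-theoretic fiber of $p_1|_W$ is a reduced point; the Nakayama argument for the finite surjection $W \to S_1$ then gives $W \simeq S_1$, and $\tilde S = W$ follows formally since $\tilde S \hookrightarrow W \to S_1$ is already an isomorphism. Your route avoids the Hilbert-polynomial computation and Serre vanishing altogether and makes transparent why $W$ can carry no nilpotents or embedded components, at the price of invoking (easy) cohomology-and-base-change for the affine morphism $q$ and the fact that proper plus finite fibers implies finite; the paper's route is more computational but simultaneously records the complete-intersection structure of $W$ explicitly. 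Both rest on the same essential input, Beauville's resolution of $\sO_S(h_2)$, and your use of it (the cokernel is a line bundle on $S_1$, not just a sheaf supported there) is exactly the right point to isolate.
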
 
\begin{proof}
As $\sO_{S_1}(h_1+h_2)$ is torsion as a sheaf on $\P^3$, the matrix $M({\mathbf a})$ is of rank $4$ for general ${\mathbf a} \in \P^3$. Thus $(\det M({\mathbf x}) = 0)$ is a hypersurface of degree $4$ in $\P^3$.  Set 
$$T := (\det M({\mathbf x}) = 0) \subset \P^3\,\, .$$ 
Let ${\mathbf a} \in S_1$. Then there is a point ${\mathbf b} \in \P^3$ such that $({\mathbf a}, {\mathbf b}) \in \tilde{S}$, as $S_1 = p_1(\tilde{S})$. As $\tilde{S} \subset (M({\mathbf x}) \cdot {\mathbf y}^{t} = 0)$, it follows that 
$$M({\mathbf a}) \cdot {\mathbf b}^{t} = 0\,\, .$$
As ${\mathbf b} \not= (0,0,0,0)$ as vectors, it follows that $\det M({\mathbf a}) = 0$. Hence $S_1 \subset T$ as sets. As both $S_1$ and $T$ are hypersurfaces of degree $4$, it follows that $S_1 = T$ as schemes. This shows (1). 

We show (2). As $S_1 = (\det M({\mathbf x}) = 0)$ is smooth, $M({\mathbf a})$ is of rank $3$ for each ${\mathbf a} \in S_1$. This directly follows from the Jacobian criterion and the chain rule applied for the cofactor expansion of $\det M({\mathbf x})$ (See eg. the first four lines in the proof of \cite[Proposition 2.2]{FGGL13}).
Set, as closed subschemes,
$$W := Q_1 \cap Q_2 \cap Q_3 \cap Q_4 \subset P\,\, .$$ 
Then $p_1(W) = S_1$, as the defining equation of $W$ is $M({\mathbf x})\cdot {\mathbf y}^t = 0$ and ${\mathbf y} \not= 0$ as vectors. 
Choose and fix a point ${\mathbf a} \in S_1$. Again, as $M({\mathbf a})$ is of rank $3$, there is exactly one point ${\mathbf b} \in \P^3$ such that $M({\mathbf a}) \cdot {\mathbf b}^t = 0$, i.e., the fiber $(p_1|_W)^{-1}({\mathbf a})$ is exactly one point. As $p_1 |_{\tilde{S}} : \tilde{S} \to S_1$ is an isomorphism and $\tilde{S} \subset W$, it follows that $\tilde{S} = W_{{\rm red}}$. Here $W_{{\rm red}}$ is the reduction of $W$. In particular, $W$ is of pure dimension $2$. It follows that $W = Q_1 \cap Q_2 \cap Q_3 \cap Q_4$ is a complete intersection as schemes. 

We now show that ${\tilde S} = W$ as schemes. 
We have a natural exact sequence
$$0 \to \sI \to \sO_W \to \sO_{\tilde{S}} \to 0\,\, ,$$
as ${\tilde S}$ is a closed subscheme of $W$. 
Tensoring the invertible sheaf $\sO_{P}(n, n)$ ($n \in \Z_{>0}$), we have an exact sequence
$$0 \to \sI(n,n) \to \sO_W(n, n) \to \sO_{\tilde{S}}(n, n) \to 0\,\, .$$
Note that $\sO_{\tilde{S}}(n, n) = \sO_S(n(h_1+h_2))$ under the identification $\tilde{S} = S$ by $\Phi$. Then, we have  
$$h^0(\tilde{S}, \sO_{\tilde S}(n, n)) = h^0(S, \sO_S(n(h_1+h_2))) = 
\frac{n^2((h_1+h_2)^2)_S}{2} + \chi(\sO_S) = 10n^2 +2\,\, .$$
As $W$ is a complete intersection of four hypersurfaces $Q_i$ of bidegree $(1, 1)$ in $P$, we also readily obtain that
$$h^0(W, \sO_{W}(n, n)) = \chi(\sO_{W}(n, n))) = 10n^2 +2$$ 
for all sufficiently large $n$, say $n \ge n_1$.  Hence
$$h^0(W, \sO_{W}(n, n)) = h^0(\tilde{S}, \sO_{\tilde{S}}(n, n))\,\, ,$$
for all $n \ge n_1$. As $\sO_P(1,1)$ is ample, there is an integer $n_2 \ge n_1$ such that $H^1(W, \sI(n, n)) = 0$ for all $n \ge n_2$. Then $H^0(W, \sI(n, n)) = 0$ for all $n \ge n_2$ from 
$$0 \to H^0(W, \sI(n, n)) \to H^0(W, \sO_W(n, n)) \to H^0(\tilde{S}, \sO_{\tilde{S}}(n, n)) \to H^1(W, \sI(n, n)) = 0\,\, .$$
It follows that $\sI = 0$ as sheaves, as $\sO_{P}(1,1)$ is ample. 
Hence $\tilde{S} = W$ as claimed. This completes the proof of (2).  

We show (3). We have 
$\tilde{S} = (M({\mathbf x}) \cdot {\mathbf y}^t = {\mathbf 0}^t) = (N({\mathbf y}) \cdot {\mathbf x}^t = {\mathbf 0}^t)$
by (2) and by definition of $N({\mathbf y})$. Then 
$S_2 = p_2(\tilde{S}) \subset (\det N({\mathbf y}) = 0) \subset \P_2^3$. The matrix $N({\mathbf b})$ is of rank $4$ for each ${\mathbf b} \in \P_2^3 \setminus S_2$. Indeed, otherwise, for some ${\mathbf b} \in \P_2^3 \setminus S_2$, there is a vector ${\mathbf a} \not= {\mathbf 0}$ such that $N({\mathbf b}) \cdot {\mathbf a}^t = {\mathbf 0}^t$. This means that there is a point $({\mathbf a}, {\mathbf b}) \in \tilde{S}$ such that ${\mathbf b} \not\in S_2$, a contradiction to the fact that $S_2 = p_2(\tilde{S})$. 

Hence $\det N({\mathbf y})$ is a non-zero homogeneous polynomial of degree $4$. As $S_2$ is a smooth quartic surface, it follows that $S_2 = (\det N({\mathbf y}) = 0)$ as claimed. 
\end{proof}

\section{Proof of Theorem (\ref{main3})(2).}\label{sect5}
In this section we shall prove Theorem (\ref{main3})(2).
We freely use the notation introduced in Section \ref{sect2} and Theorem (\ref{main3}). 

By Theorem (\ref{main3})(1), $S_i \subset \P^3$ are determinantal smooth quartic surfaces in $\P^3$. 

$V = Q_1 \cap Q_2 \cap Q_3 \subset P$ is a complete intersection as so is $S = V \cap Q_4$ and $Q_4$ is ample. As $S = V \cap Q_4$ is smooth, it follows that $V$ is normal. Indeed, otherwise, $V$ would have a singular locus of codimension one, as $V$ is a complete intersection, hence Cohen-Macaulay. Then, the ample Cartier divisor $Q_4$ intersects with the singular locus of $V$, at which $V \cap Q_4$ is necessarily singular, as $Q_4$ is Cartier. This contradicts the smoothness of $S$. Hence $V$ is normal.

The morphism $p_1|_V : V \to \P^3$ is birational, as the matrix $M({\mathbf a})$ is of rank $4$ for general ${\mathbf a} \in \P^3$ (See Section \ref{sect4} for the definition of $M({\mathbf x})$.) Changing the roles of ${\mathbf x}$ and ${\mathbf y}$ by taking the transpose, we see that the morphism $p_2|_V : V \to \P^3$ is also birational. Thus $\tau \in {\rm Bir} (\P^3)$. 

Let $H$ be the hyperplane bundle of $\P^3$ and $H_i := p_i^*H$. Then $H_i|_V$ are line bundles on $V$ with intersection numbers
$$((H_1|_V)^3)_V = (H_1^3(H_1+H_2)^3)_{P} = 3\,\, ,\,\, ((H_1|_V)^2H_2|_V)_V = 
(H_1^2H_2(H_1+H_2)^3)_{P} = 6\,\, .$$
If $\tau \in {\rm Aut} (\P^3)$, then $\tau^*H = H$. As $\tau \circ (p_1|_V) = p_2|_V$, it follows that $H_1|V = H_2|V$ as line bundles on $V$. However, then 
$((H_1|_V)^3)_V = ((H_1|_V)^2H_2|V)_V$, a contradiction to the computation above. Hence $\tau \not\in {\rm Aut} (\P^3)$.

As $V$ is normal, the fibers of $p_1|_V$ are connected by the Zariski main theorem. Thus $(p_1|_V)^{-1}({\mathbf a})$ is connected for all ${\mathbf a} \in \P^3$. As $S_1$ is of codimension one in $\P^3$ and $p_1|_V$ is birational, it follows that $(p_1|_V)^{-1}$ is an isomorphism over some Zariski open dense subset of $S_1$, in particular, at the generic point $\eta_{S_1}$ of $S_1$. As $p_1|_V(\eta_S) = \eta_{S_1}$ by $(p_1|_V)(S) = S_1$, it follows that $(p_1|_V)^{-1}(\eta_{S_1}) = \eta_S$, and therefore, $\tau(\eta_{S_1}) = \eta_{S_2}$ by $p_2|_V(\eta_S) = \eta_{S_2}$. Thus $\tau$ induces a birational map $\tau|_{S_1}$ from $S_1$ to $S_2$. The map $\tau|_{S_1}$ is then an isomorphism, as both $S_i$ are smooth surfaces with nef canonical divisors. 
This completes the proof of Theorem (\ref{main3})(2).

\section{Proof of Theorem (\ref{main3})(3).}\label{sect6}

In this section we shall prove Theorem (\ref{main3})(3). We freely use the notation introduced in Section \ref{sect2} and Theorem (\ref{main3}). 

Throughout this section, $S = Q_1 \cap Q_2 \cap Q_3 \cap Q_4$ is a complete intersection 
in $P =\P^3 \times \P^3$ of four hypersurfaces $Q_k$ of bidegree $(1,1)$. We denote by $H$ the hyperplane bundle of $\P^3$. We set $H_i = p_i^*H$ and 
$h_i = H_i|_S$. Here $p_i : P \to \P^3$ is the $i$-th projection. Then 
$$((h_i,h_j)_S) = \left(\begin{array}{rr}
4 & 6\\
6 & 4
\end{array} \right)\,\, ,$$
provided that $S = Q_1 \cap Q_2 \cap Q_3 \cap Q_4$ is a complete intersection 
in $P$.  

Let $V = Q_1 \cap Q_2 \cap Q_3 \subset P$. If $Q_k \subset P$ ($k=1$, $2$, $3$, $4$) are general ({\it here we do not need they are very general}), $V$ is a smooth Fano threefold and $S$ is a smooth K3 surface, by the Bertini theorem and the adjunction formula. 
\begin{proposition}\label{prop62}
If $Q_k \subset P$ ($k=1$, $2$, $3$) are general and $Q_4$ is very general, 
then 
${\rm NS}\, (S) = \Z h_1 \oplus \Z h_2$.  
\end{proposition}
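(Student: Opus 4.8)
The plan is to prove Proposition \ref{prop62} by a Noether--Lefschetz type argument combined with the Hodge-theoretic characterization of $\mathrm{NS}$ as the primitive $(1,1)$-part of $H^2$. First I would fix general $Q_1, Q_2, Q_3$ so that $V = Q_1 \cap Q_2 \cap Q_3 \subset P$ is a smooth Fano threefold, and then vary $Q_4$ in the (very large) linear system $|\sO_P(1,1)|_V|$ of hyperplane sections of $V$ under its anticanonical (or at least $\sO_P(1,1)$-) embedding. The inclusion $\Z h_1 \oplus \Z h_2 \subseteq \mathrm{NS}(S)$ is automatic from the intersection form being nondegenerate (its determinant is $4\cdot 4 - 36 = -20 \ne 0$), so the content is the reverse inclusion: that for very general $Q_4$ no extra algebraic classes appear.

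The key steps, in order, are: (1) verify that $H^2(V, \Z)$ has rank $2$, spanned by $h_1|_V$ and $h_2|_V$ — this follows because $V$ is a smooth complete intersection of three $(1,1)$-divisors in $\P^3 \times \P^3$, so by the Lefschetz hyperplane theorem applied iteratively $H^2(V,\Z) \cong H^2(P,\Z) = \Z H_1 \oplus \Z H_2$ (the dimension of $V$ is $3 \ge 2$, so Lefschetz applies for $H^2$ at each stage); (2) by the Lefschetz hyperplane theorem again, the restriction $H^2(V,\Z) \to H^2(S,\Z)$ is injective and identifies $\Z h_1 \oplus \Z h_2$ with a primitive sublattice of $H^2(S,\Z)$, so any extra algebraic class on $S$ would have to lie in the orthogonal complement, i.e. in the primitive cohomology $H^2(S,\Z)_{\mathrm{prim}}$ (the part not coming from $V$); (3) invoke the infinitesimal Noether--Lefschetz / Hodge-theoretic argument: the variation of Hodge structure on $H^2(S,\Z)_{\mathrm{prim}}$ as $Q_4$ moves has the property that for very general $Q_4$, the Hodge locus where some primitive integral class becomes $(1,1)$ is a countable union of proper analytic subvarieties of the parameter space, provided the period map is nonconstant, i.e. provided $H^2(S)_{\mathrm{prim}}$ is not of Tate/Hodge type on a Zariski-open set. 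The standard reference here is the general Noether--Lefschetz theorem for sufficiently ample linear systems (Green, Voisin), or one can argue directly using the family of $L$-polarized K3 surfaces: the $L$-polarized period domain for $L = \Z h_1 \oplus \Z h_2$ with the above Gram matrix has dimension $20 - 2 = 18$, the family of such complete intersections $S$ dominates a positive-dimensional (in fact, after accounting for the $\mathrm{PGL}_4 \times \mathrm{PGL}_4$ action, still positive-dimensional) subvariety of this period domain, and hence a very general member has Picard number exactly $2$.

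Concretely I would carry out step (3) as follows: the Torelli theorem and surjectivity of the period map for K3 surfaces (as cited via \cite{Mo84}, \cite{Ni79}, \cite{Do96} in Section \ref{sect3}) show that K3 surfaces with $\mathrm{NS} \supseteq L$ and the prescribed polarizations form an $18$-dimensional family, and that those with $\mathrm{NS}$ strictly larger form a countable union of hypersurfaces in this family. It then suffices to show that the map from the space of admissible $Q_4$ (modulo the automorphisms preserving $V$) to this $18$-dimensional family is dominant, or at least has image not contained in that countable union. For this one counts parameters: $\dim |\sO_P(1,1)|_V| = h^0(V, \sO_P(1,1)) - 1$, which one computes from the Koszul resolution of $\sO_V$ on $P$ to be large (certainly $\ge 18 + \dim \mathrm{Aut}(V,\mbox{polarization})$), and one checks the differential of the period map is generically injective modulo this automorphism group using Griffiths' description of the infinitesimal variation of Hodge structure via the Jacobian ring. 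The main obstacle I expect is precisely this last point — showing the period map is nonconstant/generically submersive, i.e. ruling out the possibility that every such $S$ happens to lie on one fixed Noether--Lefschetz divisor. In the interest of the paper this is likely handled by citing the general Noether--Lefschetz theorem for very ample systems, or, since the authors presumably want an effective statement, by exhibiting even a single $S$ in the family with $\mathrm{NS}(S) = L$ (e.g. via an explicit lattice-theoretic or Shioda--Inose construction), which together with the countable-union structure forces the very general member to have $\mathrm{NS}(S) = L$ as well.
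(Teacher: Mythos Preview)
Your approach is essentially the paper's: compute $\mathrm{Pic}(V)$ by the Lefschetz hyperplane theorem, then apply a Noether--Lefschetz theorem to the family $S \in |(H_1+H_2)|_V|$. The difference is only in step~(3), where you make life harder than necessary.

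The paper disposes of step~(3) in one line: since $V$ is Fano we have $H^{2,0}(V)=0$, while $H^{2,0}(S)\neq 0$ because $S$ is K3; these are exactly the hypotheses of the Noether--Lefschetz theorem as stated in \cite[Theorem~3.33]{Vo03}, which then gives $\mathrm{Pic}(S)=\mathrm{Pic}(V)|_S$ for very general $S$ in the very ample linear system $|(H_1+H_2)|_V|$. (One small point the paper makes explicit and you leave implicit: the restriction map $H^0(\sO_P(H_1+H_2))\to H^0(\sO_V((H_1+H_2)|_V))$ is surjective, so ``$Q_4$ very general in $P$'' does translate to ``$S$ very general in $|(H_1+H_2)|_V|$''.) There is no need to count parameters, to analyze the differential of the period map via the Jacobian ring, or to produce an explicit K3 with $\mathrm{NS}=L$; the vanishing $H^{2,0}(V)=0$ is all that is required to guarantee the period map is nonconstant and hence that the Noether--Lefschetz loci are a countable union of proper subvarieties. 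Your alternative routes would work, but they replace a one-line citation by several pages.
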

\begin{proof} As ${\rm Pic}\, (P) = \Z H_1 \oplus \Z H_2$, we have 
${\rm Pic}\, (V) = \Z H_1|_V \oplus \Z H_2|_V$ 
by the Lefschetz hyperplane theorem. Observe that the natural restriction map 
$H^0(\sO_P(H_1+H_2)) \to H^0(\sO_V((H_1+H_2)|_V))$
is surjective. Then $S = Q_4|_V$ is also very general in the very ample linear system $|(H_1+H_2)|_V|$. In addition, we have $H^{2,0}(S) \not= 0$ but $H^{2,0}(V) = 0$, as $S$ is a K3 surface and $V$ is a Fano threefold. Now we can apply the Noether-Lefschetz theorem (\cite[Theorem 3.33]{Vo03}) for $S \subset V$ and obtain the result. 
\end{proof}
Set $S_i = p_i(S) \subset \P^3$ ($i=1$, $2$). Note that if $S$ satisfies the condition (1)(b) in Theorem (\ref{main3}), then $S_i$ are smooth quartic K3 surfaces which are Cremona isomorphic by Theorem ({\ref{main3})(2). So, Theorem ({\ref{main3})(3) now follows from:
\begin{proposition}\label{prop63}
If $S$ is smooth and ${\rm NS}\, (S) = \Z h_1 \oplus \Z h_2$, 
then $S$ satisfies the condition (1)(b) in Theorem (\ref{main3}) and $S_1$ and $S_2$ are not projectively equivalent in $\P^3$.  
\end{proposition}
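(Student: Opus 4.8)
The plan is to obtain (1)(b) from the characterization in Theorem (\ref{main3})(1), and to prove the non‑equivalence by converting a hypothetical projective isomorphism $S_1\cong S_2$ into an automorphism of $S$ and obstructing it on the transcendental lattice. For (1)(b): by Theorem (\ref{main3})(1) it suffices to check (1)(a) for the triple $(S,h_1,h_2)$. The surface $S$ is a K3 surface by the adjunction formula, as in the discussion preceding Proposition (\ref{prop62}), and $((h_i,h_j)_S)$ is the matrix displayed in Theorem (\ref{main3})(1), by the computation at the start of this section. To see that $h_1$ and $h_2$ are very ample, note that $((xh_1+yh_2)^2)_S=4(x^2+3xy+y^2)$ lies in $4\Z$ and vanishes on $\Z^2$ only at the origin (as $5$ is not a square), so ${\rm NS}(S)$ represents neither $0$ nor $\pm2$; arguing exactly as in Lemmas (\ref{lem44})(2) and (\ref{lem47})(1), $S$ then contains no smooth rational curve, each $h_i$ is ample, $|h_i|$ is base point free by \cite[2.7]{SD74}, and $h_i$ is very ample by \cite[Theorem 5.2]{SD74}. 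Thus (1)(a), and hence (1)(b), holds; moreover, since $S$ is a complete intersection of four hypersurfaces of bidegree $(1,1)$ in $P$, no non‑zero section of $\sO_P(1,0)$ or $\sO_P(0,1)$ vanishes on $S$, so each $S_i=p_i(S)$ is nondegenerate in $\P^3$, whence $p_i|_S$ is exactly the morphism defined by $|h_i|$ and, by Proposition (\ref{lem32}), is an isomorphism onto the determinantal quartic $S_i$.

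Now suppose $\gamma\in{\rm PGL}(4,\C)$ satisfies $\gamma(S_1)=S_2$. Since $\gamma$ pulls back the hyperplane class of $S_2$ to that of $S_1$ and $p_i|_S$ is the embedding given by $|h_i|$, the automorphism $g:=(p_2|_S)^{-1}\circ\gamma\circ(p_1|_S)$ of $S$ satisfies $g^*h_2=h_1$. Then $g^*$ carries the class $[h_2/2]\in{\rm NS}(S)^*/{\rm NS}(S)$ to $[h_1/2]$, and these differ because $h_1,h_2$ is a $\Z$‑basis of ${\rm NS}(S)$; hence $g^*$ acts non‑trivially on ${\rm NS}(S)^*/{\rm NS}(S)\simeq T(S)^*/T(S)$, so $g^*|_{T(S)}\neq\pm{\rm id}$. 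A short computation with the intersection form gives $g^*h_1\in\{h_2,\,3h_1-h_2\}$, so $g^*|_{{\rm NS}(S)}$ is of order $2$ or of infinite order. As in the proof of Lemma (\ref{lem41}), $g^*\sigma_S=\alpha\sigma_S$ with $\alpha$ a root of unity, say of order $e$; by the minimality of $T(S)$ the cyclotomic polynomial $\Phi_e$ annihilates $g^*|_{T(S)}$, so $g^*|_{T(S)}$ is a semisimple isometry of order $e$ with minimal polynomial $\Phi_e$, $\phi(e)\mid{\rm rank}\,T(S)=20$, and $e\geq3$.

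If $g^*|_{{\rm NS}(S)}$ has order $2$, then $g$ is of finite order by Lemma (\ref{lem43}), and $e$ is even because $g^*$ acts with order $2$ on $T(S)^*/T(S)$. The power $g^{e/2}$ acts on $T(S)$ by $-{\rm id}$ and on $\sigma_S$ by $-1$, so it is a non‑symplectic involution, whose invariant lattice in $H^2(S,\Z)$ is ${\rm NS}(S)$ if $4\mid e$ and $\Z(h_1+h_2)\simeq\langle20\rangle$ if $e\equiv2\pmod4$; but the invariant lattice of a non‑symplectic involution of a K3 surface is $2$‑elementary (Nikulin), whereas the discriminant groups $\Z/2\oplus\Z/10$ and $\Z/20$ of these lattices both contain $5$‑torsion — a contradiction. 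If instead $g^*|_{{\rm NS}(S)}$ has infinite order, one checks that $g^*$ acts on ${\rm NS}(S)^*/{\rm NS}(S)$ with order divisible by $3$, so $3\mid e$; on the other hand the $5$‑primary part of $T(S)^*/T(S)$ is $\simeq\Z/5$, a non‑zero module over $\Z[\zeta_e]$ (acting through $g^*|_{T(S)}$) annihilated by $5$, which forces $\Phi_e$ to have a root in $\F_5$ and hence $e\mid4$ — incompatible with $3\mid e$. In either case we reach a contradiction, so $S_1$ and $S_2$ are not projectively equivalent.

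The essential point — and the likely main obstacle — is the last paragraph. Since $S$ is only assumed to satisfy ${\rm NS}(S)=\Z h_1\oplus\Z h_2$ rather than to be very general, Lemma (\ref{lem41}) cannot be used directly to exclude $g^*|_{T(S)}\neq\pm{\rm id}$, so one must bound the order $e$ of $g^*|_{T(S)}$ through the minimality of $T(S)$ and then eliminate every surviving value of $e$ by exploiting the discriminant form of ${\rm NS}(S)$ (the prime $5$ dividing its order $20$, and the failure of $2$‑elementarity of ${\rm NS}(S)$ and of $\langle20\rangle$). Everything else is a routine reworking of the lattice arguments of Section \ref{sect3}.
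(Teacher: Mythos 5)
Your proof is correct, and for the decisive step it takes a genuinely different route from the paper. The first half coincides with the paper's argument: freeness and very ampleness of $h_i$ via \cite[Theorem 5.2]{SD74} (no classes of square $0$ or $\pm 2$), hence condition (1)(b), and the reduction of a hypothetical projective equivalence to an automorphism $g$ of $S$ with $g^*h_2=h_1$; your observation that no $(1,0)$- or $(0,1)$-form vanishes on $S$, so that $p_i|_S$ is given by the \emph{complete} system $|h_i|$, fills in a point the paper passes over silently. The difference is in excluding $g$: the paper simply quotes Lemma (\ref{lem21}), i.e.\ \cite[Theorem 1.1 and proof]{FGGL13}, to the effect that every $f\in{\rm Aut}(S)$ satisfies $f^*\sigma_S=\pm\sigma_S$ and hence acts by $\pm\id$ on ${\rm NS}(S)^*/{\rm NS}(S)$, which $g$ visibly does not; you instead reprove what is needed from scratch by classifying the two possible actions on ${\rm NS}(S)$ (namely $g^*h_1=h_2$ or $g^*h_1=3h_1-h_2$, which I checked), showing via minimality of $T(S)$ that $g^*|_{T(S)}$ has minimal polynomial $\Phi_e$ with $e\ge 3$, and then ruling out the finite-order case by the $2$-elementarity of the invariant lattice of an involution of the unimodular lattice $H^2(S,\Z)$ (both candidates, ${\rm NS}(S)$ and $\langle 20\rangle$, have $5$-torsion in their discriminant groups) and the infinite-order case by the mod-$5$ obstruction ($3\mid e$ versus a root of $\Phi_e$ in $\F_5$). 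These steps are sound; the trade-off is that the paper's proof is a few lines but imports a nontrivial external theorem, whereas yours is self-contained and makes transparent the role of the prime $5$ in $\det {\rm NS}(S)=-20$. Two small points to tighten: the statement that $g^*\sigma_S$ is a root of unity times $\sigma_S$ is not literally ``as in Lemma (\ref{lem41})'' (that proof only yields a cyclotomic integer and then invokes very generality); cite instead the standard finiteness of the image of ${\rm Aut}(S)\to {\rm O}(T(S))$ for projective K3 surfaces, e.g.\ via the compactness of the orthogonal group of the definite decomposition of $T(S)_{\R}$ determined by $\sigma_S$. Also ``hence $e\mid 4$'' is accurate only when $5\nmid e$; what your argument really gives is that the prime-to-$5$ part of $e$ divides $4$, which still contradicts $3\mid e$, so the conclusion stands.
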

\begin{proof}
As $h_i = p_i^*H|_S$, the complete linear systems $\vert h_i \vert$ are free and $p_i = \Phi_{|h_i|}$. There is no $d \in {\rm NS}\, (S) \setminus \{0\}$ with 
$(d^2)_S \in \{0, \pm 2 \}$, by ${\rm NS}\, (S) = \Z h_1 \oplus \Z h_2$ and by the shape of $((h_i,h_j)_S)$. Thus $h_i$ are very ample by \cite[Theorem 5.2]{SD74}. Hence $S$ satisfies the condition (1)(b) in Theorem (\ref{main3}).

We show that $S_i$ ($i=1$, $2$) are not projectively equivalent. Using the shape of $((h_i,h_j)_S)$ again, we also find that 
$${\rm NS}\, (S)^*/{\rm NS}\, (S) = \langle \frac{h_1}{2} \rangle \oplus \langle \frac{h_2}{2} \rangle \oplus \langle \frac{h_1 + h_2}{5} \rangle \simeq \Z_2 \oplus \Z_2 \oplus \Z_5\,\, .$$
Moreover, by \cite[Theorem 1.1 and proof]{FGGL13}, we have
 
\begin{lemma}\label{lem21}
$f^*\sigma_S = \pm \sigma_S$, whence $f^{*} |_{{\rm NS}\, (S)^*/{\rm NS}\, (S)}) = \pm id_{{\rm NS}\, (S)^*/{\rm NS}\, (S)}$ for any $f \in {\rm Aut}\, (S)$.
\end{lemma}
\begin{lemma}\label{lem22}
There is no $f \in {\rm Aut}\, (S)$ such that $f^*h_1 = h_2$.  
\end{lemma}
\begin{proof} 
Otherwise, $f^*|_{{\rm NS}(S)^*/{\rm NS}\, (S)}$ would satisfy 
$$f^*\frac{h_1}{2} = \frac{h_2}{2}\,\, ,$$
in particular, $f^*|_{{\rm NS}(S)^*/{\rm NS}\, (S)} \not= \pm id$, a contradiction to Lemma \ref{lem21}. 
\end{proof}
If $g(S_2) = S_1$ under $g \in {\rm Aut}\, (\P^3) = {\rm PGL}(4, \C)$, then $g^*H= H$. However, then $f^*h_1 = h_2$ for $f = (p_1|_S)^{-1} \circ g \circ (p_2|_S) \in {\rm Aut}\, (S)$, a contradiction to Lemma (\ref{lem22}). This completes the proof of Proposition (\ref{prop63}). 
\end{proof}

\section{Appendix - Proof of Proposition (\ref{takahashi}).}\label{sect7}

In this appendix, we shall give a proof of Proposition (\ref{takahashi}) as it is not explicit in \cite{Ta98}. However, we should emphasize that all the arguments below are found in \cite{Ta98}.  

As $S$ and $S'$ are both smooth, the pairs $(\P^3, (1-\epsilon)S)$ and $(\P^3, (1-\epsilon)S')$ are both klt for any $0 < \epsilon <1$. In particular, one can use Noether-Fano inequality (\cite[Theorem 1.4]{Ta98}) to study the birational map $\Phi : \P^3 \dasharrow \P^3$ with $\Phi_*(S) = S'$. We will make a more specific choice of $\epsilon$ later.

Let $p : Y \to \P^3$ be a Hironaka resolution of indeterminacy of $\Phi$. We denote by $p' : Y \to \P^3$ the morphism such that $p' = \Phi \circ p$. We denote $S_Y = p_*^{-1}S$. Then $S_Y = (p')_*^{-1}S'$ as well. Denote by $\{E_j | j \in J\}$ the set of exceptional prime divisors of $p$. 

Let $\sL := \Phi_*^{-1}|H|$ be the proper transform of the complete linear system $|H| = |\sO_{\P^3}(1)|$ by $\Phi^{-1}$. Then $\sL$ is a linear subsystem of $|dH|$ for some $d \ge 1$ and $\sL$ has no fixed component. By abuse of notation, we also denote by $\sL$ a general element of the linear subsystem $\sL$. We define $a \in \Q_{>0}$ by
$$a = \frac{{\rm deg}\, S}{{\rm deg}\, \sL} = \frac{4}{d}\,\, .$$
We choose $0 < \epsilon <1$ so that
$$a\epsilon < 1\,\, , \,\, ad\epsilon = 4\epsilon <1\,\, .$$
As $K_{\P^3} = -4H$, $S = 4H$ and $\sL = dH = 4H/a$ in ${\rm NS}\,(\P^3)_{\R} = \R \cdot H$, we have
$$K_{\P^3} + (1-\epsilon)S + a\epsilon \sL = 0$$
in ${\rm NS}(\P^3)_{\R}$. We set 
$$D(t) := K_Y + (1 - \epsilon)S_Y + tp_*^{-1}\sL - p^*(K_{\P^3} + (1 - \epsilon)S + t\sL)$$
and define the rational number $c$ by
$$c := {\rm Max}\, \{0 < t \le 1 | D(t) \ge 0\}\,\, .$$
Here the inequality means that the $\R$-linear extension of the vanishing order ${\rm ord}_E(D(t))$ at all prime divisor $E \subset Y$, or equivalently at all $E_j$ ($j \in J$), is nonnegative. Note that $D(t) \ge 0$ if $t \le c$. We also define the slope $\mu \in \Q_{>0}$ by
$$\sL = \mu (-(K_{\P^3}+ (1-\epsilon)S))$$
in ${\rm NS}(\P^3)_{\R} = \R \cdot H$. In our case
$$\mu = \frac{1}{a\epsilon}\,\, .$$
Note that $K_{\P^3} + (1 -\epsilon)S + a\epsilon \sL = 0$ and this is nef. Thus, by the Noether-Fano inequality (\cite[Theorem 1.4]{Ta98}), $\Phi$ is an isomorphism if $c \ge a\epsilon$. As $\Phi$ is not an isomorphism by our assumption, it follows that 
$$c < a\epsilon\,\, .$$ 
As $0 < c < a\epsilon < 1$, there is then $E := E_j$ ($j \in J$) such that 
$${\rm ord}_{E}(D(a\epsilon)) < 0\,\, .$$
We choose and fix such an $E$ and set $F := p(E)_{{\rm red}}$. Then $F$ is either a closed point or an irreducible reduced curve on $\P^3$. Let 
$$\pi : Z \to \P^3$$
 be the blow up of $F$ and $E' \subset Z$ be the unique exceptional prime divisor that dominates $F$. Note that $\pi$ is the usual blow up over $\P^3 \setminus {\rm Sing}\, F$. Hence, $Z$ is smooth over $\P^3 \setminus {\rm Sing}\, F$. Here ${\rm Sing}\, F$ is the singular locus of $F$. In particular, $Z$ is smooth around the generic point $\eta_{E'}$ of $E'$. Note that ${\rm ord}_{E}(D(t))$ is determined at the generic point $\eta_{E}$ and does not depend on the birational model we choose. So, to compute ${\rm ord}_{E}(D(t))$, we may (and will) identify $(Y, E) = (Z, E')$. 

If $P$ is a point, then 
$${\rm ord}_{E}(D(a\epsilon)) \ge 2 - (1 - \epsilon) - a\epsilon \cdot {\rm mult}_P \sL 
\ge 1 + \epsilon - ad\epsilon > 0$$
by our choice of $\epsilon$. Here ${\rm mult}_P \sL$ is the multiplicity of (a general element of) $\sL$ at $P$. Similarly, if $F$ is a curve such that $F \not\subset S$, then 
$${\rm ord}_{E}(D(a\epsilon)) = 1 + 0 - a\epsilon \cdot {\rm mult}_F \sL 
\ge 1 - ad\epsilon > 0$$  
by our choice of $\epsilon$. Here ${\rm mult}_F \sL$ is the multiplicity of $\sL$ at a general point $P \in F$. 

Thus $F$ has to be a curve and $F \subset S$, as ${\rm ord}_{E}(D(a\epsilon)) < 0$. 

From now, we will work in $\pi : Z \to \P^3$ and will write $E'$ by $E$. 
Set 
$$m := {\rm mult}_F \sL\,\, .$$ 
Then 
$$0 > {\rm ord}_{E}(D(a\epsilon)) = 1 -(1-\epsilon) - a\epsilon m = \epsilon(1 - am)\,\, ,$$
and therefore 
$$am > 1\,\, .$$
Take a general plane $H \subset \P^3$. Then
$$H \cap F = \{p_1, \ldots , p_{{\rm deg}\, F}\} \subset F \setminus {\rm Sing}\, F\,\, .$$
In particular, $\pi|_{\pi^*H} : \pi^*H \to H$ is a usual blow-up at the $k$ points $p_i \in H$. Let $e_i$ be the exceptional curve over $p_i$. As $H \cap {\rm Sing}\, F = \emptyset$, it follows that, around $\pi^*H \subset Z$, the Weil divisor $\sL_Z := \pi_{*}^{-1}\sL$ is Cartier and is of the form 
$$\sL_Z = \pi^*\sL - mE\,\, .$$
Thus 
$$\sL_Z|_{\pi^*H} = \pi^*(\sL|_{H}) - \sum_{i=1}^{{\rm deg}\, F} me_i\,\, .$$
Combing this with the fact that the linear system $\sL$ is movable and $H$ is general, we have
$$0 \le ((\sL_Z|_{\pi^*H})^2)_{\pi^*H} = ({\rm deg}\, \sL)^2 - m^2{\rm deg}\, F\,\, .$$
As ${\rm deg}\, \sL = d = 4/a$ and $am >1$, it follows that
$${\rm deg}\, F \le (\frac{4}{am})^2 < 16\,\, .$$
Now it suffices to prove that $F$ and $h := H|S$ are linearly independent in ${\rm NS}\, (S)$. Assume to the contrary that they are linearly dependent. Then $F = bh$ for some $b \in \Q_{>0}$. As $(h^2)_S = 4$ and $(F^2)_S$ is even as $S$ is a smooth K3 surface, it follows that $b$ is a positive integer. As ${\rm Pic}\, (S) \simeq {\rm NS}\,(S)$ and they are torsion free, it follows that $F = ah$ in ${\rm Pic}\, (S)$. As $H^{1}(\P^3, \sO_{\P^3}(n)) = 0$ for all $n \in \Z$, it follows that the restriction map 
$$H^{0}(\P^3, \sO_{\P^3}(b)) \to H^0(S, \sO_{S}(bh))$$
is surjective. So, there is a surface $T \subset \P^3$ of degree $b$ such that $F = S \cap T$ as schemes. Let $R$ be the proper transform of $T \cap H$ under the morphism $\pi : Z \to \P^3$. Here we recall that $H \subset \P^3$ is a general plane. Then we have 
$$0 \le (\sL_{Z}.R)_{Z} = (\sL.T.H)_{\P^3} - m(F.T)_{\P^3} = (1-am)(\sL.T.H)_{\P^3}\,\, .$$
For the last equality we used $F = S \cap T$ and $S = a\sL$ in ${\rm NS}\, (\P^3)_{\R}$. However, then $am \le 1$ as $(\sL.T.H)_{\P^3} >0$, a contradiction to the previous inequality $am >1$. This completes the proof of Proposition (\ref{takahashi}).

\end{document}